\newcommand{\cop}{\mathsf{copy}}
\newcommand{\del}{\mathsf{del}}
\newcommand{\beq}{\begin{equation}}
\newcommand{\eeq}{\end{equation}}
\tikzstyle{none}=[]
\tikzset{baseline=(current  bounding  box.center)}
\newcommand*{\isoarrow}[1]{\arrow[#1,"\rotatebox{90}{\(\sim\)}"
]}
\newtheorem{thm}{Theorem}[section]  
\newtheorem{lem}[thm]{Lemma}
\newtheorem{defi}[thm]{Definition}
\newtheorem{prop}[thm]{Proposition}
\newtheorem{es}[thm]{Example}
\newtheorem{rem}[thm]{Remark}
\DeclareMathOperator{\Homs}{\mathscr{H}\text{\kern -3pt {\calligra\large om}}\,}
\title{A category theory framework for Bayesian learning}
\date{}
\author{{Kotaro Kamiya} \& {John Welliaveetil} \thanks{Authors have equal contribution.} \\
SyntheticGestalt Ltd \\
\text{\{k.kamiya, j.welliaveetil\}@syntheticgestalt.com}
}
\begin{document}
 \maketitle

\begin{abstract}
Inspired by the foundational works in \cite{FSBackprop} and \cite{CGgradientlearning}, we 
introduce a categorical framework to formalize Bayesian inference and learning. The
two key ideas at play here are the notions of Bayesian inversions and the 
functor GL as 
constructed in 
\cite[\S 2.1]{CGgradientlearning}.
We find that Bayesian learning is the simplest case 
of the learning paradigm described in \cite{CGgradientlearning}. 
We then obtain categorical formulations 
of batch and sequential Bayes updates while also verifying that the two coincide in 
a specific example.
\end{abstract}

{\hypersetup {linkcolor = black} 
\tableofcontents
}

\section{Introduction}

      A standard problem in Machine learning is to understand the relationship between 
      two variables or random vectors. Suppose 
      $\mathbf{x}$ is a random vector and $y$ is a
      random variable dependent on $\mathbf{x}$. 
      The additive model assumes that 
     there exists a function $f$ such that 
     $$y = f(\mathbf{x}) + \epsilon$$ 
     where $\epsilon$ is a random variable with mean $0$. 
     Our goal then reduces to estimating the function $f$.
      To this end, we consider a 
     parametrized family of functions $\{f_{\theta}\}_{\theta \in P}$ where $P$ is the parameter space
      and by means of
     a learning algorithm and a training data set, we 
     traverse $P$ to 
     find a candidate that we believe will provide a reasonable approximation 
     to the true function $f$. Several theoretical results underpin the validity 
     of this approach - for instance the Universal Approximation Theorem.      
     
      A formulation of the above framework within Category theory was achieved in the
     foundational work of 
     Fong, Spivak and Tuyeras \cite{FSBackprop}. In this paper, the authors 
     introduce the category $\mathbf{Learn}$ 
     whose objects are sets and a morphism from $A$ to $B$
   in $\mathbf{Learn}$ consists of 
   the following data. 
   A parameter set $M$, an implementation $I \colon M \times A \to B$, 
   an update function $u \colon M \times A \times B \to M$ and 
   a request function $r \colon M \times A \times B \to A$. 
   In related work, the authors of \cite{CGgradientlearning},
   develop a more general theory by introducing the notion 
   of a Cartesian Reverse Differential category (cf. \cite[\S 2.2]{CGgradientlearning}).
    Associated 
   to a CRDC - $\mathcal{C}$, they construct  
   the analog of the category $\mathbf{Learn}$ 
   as a composite of the $\mathbf{Para}$ and $\mathbf{Lens}$ 
   constructions (cf. \cite[\S 2.1, 2.3]{CGgradientlearning}). In this setting, the learning algorithm will be a functor 
   of type $\mathbf{Para}(\mathcal{C}) \to \mathbf{Para}(\mathbf{Lens}(\mathcal{C}))$.
  Our goal in this paper is to employ both these approaches to 
  develop a categorical framework to discuss Bayesian learning. 
  
  While the approach outlined above to model the relationship between two random variables 
  is effective, in real world situations, it is often the case that there is no such function $f$.
  Given the noisy nature of the data, it is more reasonable to model 
  the conditional probability $p(y|x)$ i.e. to ask the question - 'Given $x$, what is the probability 
  that we have $y$ ?'. 
  
  As before, we model the conditional probability using a parametrized family of distributions -
 $p(y | x ; \theta)$. However in contrast to 
 our approach from before, we assume that $\theta$ is a random variable and 
 strive to obtain a distribution over $\theta$ that agrees with the given training data. 
 More precisely, we choose a prior distribution  
 $q(\theta)$ on $\theta$ and update this prior to 
 obtain the posterior distribution on $\theta$. 
 As opposed to
 gradient based approaches to learning, Bayesian machine learning updates the 
 prior distribution on $\theta$
  by exploiting Bayes Theorem. The posterior distribution is defined 
  upto normalizing constant by the formula 
  \[ p(\theta | y,x ) \propto p(y | x,\theta) q(\theta|x). \]
 There is considerable flexibility in this approach in that we can use the posterior 
 to obtain point estimates of the parameter $\theta$ via the MAP estimate and perform inference 
 by integrating over the entire distribution. We continue this discussion
  in greater detail in \S \ref{section : bayesian inference prelims} where
  we illustrate the essential features of Bayesian learning in the classical context using a simple 
 example.
  
     Our goal in this paper is to introduce a framework within category theory that allows us 
     to formalize this set up. We draw on the theory already developed in 
     \cite{TFSyntheticApproach}, \cite{Sm2020bayesian} and 
     \cite{CJ2019disintegration}.
     While we proceed as in 
     \cite{CGgradientlearning} by 
     associating to a parametrized function 
     a morphism in a generalized lens category 
     which allows for the backward transmission 
     of information, 
     an important observation 
     is that the Bayes Learning framework 
     simplifies the situation drastically. 
     This is essentially due to the 
     fact that 
     with the correct setup, Bayesian inversion 
     is a well defined dagger functor and
     the functor $\mathcal{C}^{\mathrm{op}} \to \mathrm{Cat}$ responsible 
     for defining the generalized lens breaks down. 
     We interpret this to mean that Bayes Learning is 
     the simplest form of learning that adheres to the framework 
     discussed in \cite{CGgradientlearning}. 
     See also Remark \ref{rem : bayesian learning easiest}.
  

   The work of Cho and Jacobs in \cite{CJ2019disintegration} 
   and Fritz in \cite{TFSyntheticApproach} introduces the notion 
   of Markov categories as a suitable 
   framework within which one can discuss ideas from Probability theory 
   such as Bayesian inversion, disintegration, jointification, conditionalization...
   Hence, let $\mathcal{C}$ be a Markov category. 
   We use 
   the categories FinStoch and Stoch (cf. Example \ref{es : markov category examples})
  as guiding examples for the theory we develop.
  Roughly speaking, the category FinStoch has finite sets as objects 
  and Markov kernels as morphisms between them while the objects of
  Stoch are measurable spaces and 
  morphisms $f \colon X \to Y$ imply for every 
  $x \in X$, a probability
  distribution $p(\cdot|x)$ on $Y$.      
   Central to this paper is the notion of Bayesian inversion
   which in the context of FinStoch with reference to a 
   morphism $p \colon X \to Y$ 
   corresponds to calculating the conditional 
   $p(X|y)$ for $y \in Y$ using Bayes theorem to obtain a morphism 
   $p^\dag \colon Y \to X$. 
   Note that the inversion which is a map 
   $Y \to X$ is not necessarily unique 
   and in the case of FinStoch does not even always exist.
   To get around this issue, firstly we restrict our attention 
   to those Markov categories $\mathcal{C}$ which always admit 
   Bayesian inversion.   
   Secondly, we make use of the symmetric monoidal 
   category $\mathrm{ProbStoch}(\mathcal{C})$
   which was introduced first by Cho and Jacobs in 
   \cite[\S 5]{CJ2019disintegration} as the category 
   of couplings or equivalently the category whose objects coincide with those of
   the slice category $I \downarrow \mathcal{C}$ 
   and morphisms 
   are obtained by quotienting with respect 
   to the relation of \emph{almost sure equality}.
   Henceforth, we write $\mathrm{PS}(\mathcal{C})$
   in place of $\mathrm{ProbStoch}(\mathcal{C})$ for 
   ease of notation.
   As a consequence of our assumption on $\mathcal{C}$, 
   Bayesian inversion is a well defined dagger functor on 
   $\mathrm{PS}(\mathcal{C})$.
   A more detailed explanation 
   can be found in \S \ref{section : bayesian inversions prelims}.
   
    We can interpret the set-up in \S \ref{section : bayesian inference prelims}, 
    as a parametrized function in a suitable Kleisli category. 
    In general, we assume that the category 
    $\mathcal{C}$ is equipped with the structure of an $\mathcal{M}$-actegory
    where $\mathcal{M}$ is symmetric monoidal. 
    We would like to make use of the $\mathbf{Para}_{\mathcal{M}}$ construction 
    from \cite{CGHR2021foundations} which generalizes
    \cite{FSBackprop}. 
    For reasons explained above, we work with the category 
    $\mathrm{PS}(\mathcal{C})$ and in
    Lemma \ref{lem : PS(M) actegories}, we show that 
    $\mathrm{PS}(\mathcal{C})$ 
    has the structure of an $\mathrm{PS}(\mathcal{M})$-actegory
    provided we impose certain technical assumptions 
    on $\mathcal{M}$ and its action on $\mathcal{C}$. 
    We simplify notation henceforth and write 
    $\mathrm{PS}$ in place of $\mathrm{PS}$.  
    
    Our definition of the functor BayesLearn in Section \S \ref{section : bayes learn} is 
    inspired in large part by \cite{CGgradientlearning}. 
    The key to the construction of a gradient based learner in \cite{CGgradientlearning}
    is a functor 
    \[ \mathrm{GL} \colon \mathbf{Para}(\mathcal{E}) 
    \to \mathbf{Para}(\mathrm{Lens}(\mathcal{E})) \]
    where we require that $\mathcal{E}$ is a Cartesian 
  reverse differential category. To fully describe the learning mechanism, the authors
  introduce update and error endofunctors on $\mathbf{Para}(\mathcal{E})$. 
  These three pieces when employed in unison and 
  for appropriate choices of the update and displacement maps, can 
  describe a wide range of optimization algorithms (\cite[\S 3.4]{CGgradientlearning}).
  
  When adapting the framework of 
  \cite{CGgradientlearning}
   to describe Bayes learning, we see immediately 
  that we must work with a more general notion of Lens as in 
  \cite{Sm2020bayesian}. To this end, we 
  proceed as in 
  \cite[Definition 3.1]{Sm2020bayesian} by introducing a functor 
  \[\mathrm{Stat} \colon \mathrm{PS}(\mathcal{C})^{\mathrm{op}} \to \mathrm{Cat}\]
   with which we define the associated Grothendieck lens 
   $\mathrm{Lens}_{\mathrm{Stat}}$.
  As Bayesian inversion is a well defined dagger functor, 
  we deduce the existence of a well defined 
  functor 
  \[R \colon \mathrm{PS}(\mathcal{C}) \to \mathrm{Lens}_{\mathrm{Stat}}.\]
  It follows that we have a functor
  \[ \mathbf{Para}_{\mathrm{PS}(\mathcal{M})}(R) 
  \colon \mathbf{Para}_{\mathrm{PS}(\mathcal{M})}(\mathrm{PS}(\mathcal{C})) \to \mathbf{Para}_{\mathrm{PS}(\mathcal{M})}(\mathrm{Lens}_{\mathrm{Stat}}).\]
  We refer to the functor 
  $\mathbf{Para}_{\mathrm{PS}(\mathcal{M})}(R)$ as the BayesLearn functor. 
  It captures the essential features of the Bayes learning algorithm. 
  Indeed, given a parametrized morphism $X \to Y$
  with a state on $X$, after 
  choosing a prior state on the parameter object $P$, 
  we obtain a morphism in 
  $\mathrm{PS}(\mathcal{C})$ i.e. 
  a morphism 
  \[f \colon (M,\pi_M) \odot (X,\pi_X) \to (Y,\pi_Y).\]
  By construction, 
  the induced morphism 
  \[\mathrm{BayesLearn}(f) \colon ((M \odot X,\pi_M \odot \pi_X),(M \odot X,\pi_M \odot \pi_X)) \to ((Y,\pi_Y),(Y,\pi_Y)) \]
  is given by a 
  pair of morphisms 
  \[f \colon (M \odot X,\pi_M \odot \pi_X) \to (Y,\pi_Y)\]
   in the forward direction and 
   \[f^{\dag} \colon (Y,\pi_Y) \to (M \odot X,\pi_M \odot \pi_X)\]
   in the backwards direction by Bayesian inversion. 
   This
   provides a rough description of the
    Bayesian learning procedure which we
   elaborate on in \S \ref{section : bayes learn} and
  \S \ref{sec : bayes learning algo} where we also 
   give a formulation for the Bayes predictive density.  
   
  The classical formulation of Bayesian learning enables one to update a 
  given prior distribution on a parameter space using a training data set and Bayesian inversion. 
  In \S \ref{section : bayes updates}, we provide a category theoretic formulation 
  of this phenomenon. Central to the discussion will be the notion 
  of a training set. 
  To capture the notion of a single training instance, we restrict 
  our attention to those Markov categories which 
  are of the form $\mathrm{Kl}(\mathcal{P})$ where 
  $\mathcal{P} \colon \mathcal{D} \to \mathcal{D}$ is a symmetric monoidal 
  monad on a symmetric monoidal category $\mathcal{D}$. 
  In this context, we introduce the notion 
  of an elementary point of an object $X$ in $\mathcal{C}$ 
  (cf. Definition \ref{def : elementary point}). In the case of 
  FinStoch or Stoch, these correspond to states on 
  $X$ which concentrate at a point $x \in X$. 
  We show that there are two ways one can 
  update the prior. 
  As before, let us suppose we have a model i.e. 
  a morphism $f \colon M \odot X \to Y$ in 
  $\mathcal{C}$. 
  In addition we are given a prior distribution $\pi_{M,0}$ on $M$ and a state 
  $\pi_X \colon I \to X$. Via Bayesian inversion with respect 
  to $\pi_{M,0} \odot \pi_X$ and after conditionalizing, we get a channel 
  $f^{\dag}_{\mathrm{joint},0} \colon X \otimes Y \to M$. The precise details can be found in 
  \S \ref{section : bayes learn}.
 Let 
  \[\mathbf{T} := [(x_1 \otimes y_1), \ldots, (x_n \otimes y_n)] \]
  be a list of \emph{elementary points} (cf. Definition \ref{def : elementary point})
  belonging to $X \otimes Y$.
  To obtain the posterior,
  we can proceed sequentially i.e. we update the prior 
  $\pi_{M,0}$ by the composition 
  $I \xrightarrow{\delta_{x_1} \otimes \delta_{y_1}} X \otimes Y \to M$ to get a
  state $\pi_{M,1}$ on $M$.
  Note that $x_1$ corresponds to a morphism 
  $I_\mathcal{D} \to X$ and $\delta_{x_1}$ is the image 
  of this map in $\mathcal{C}$. 
  We then implement the inversion procedure with 
  respect to $\pi_{M,1}$
  to get an updated channel $X \otimes Y \to M$ and continue as before. 
  Repeating the above step to run through the training data set 
  will end with a state $\pi_{M,n}$ on $M$ which 
  in the case of FinStoch defines a distribution 
  on $M$ which we call the posterior.
  
  In a similar fashion, we can also define batch updates 
  associated to the training set $\mathbf{T}$. 
  To do so, we work with the space $Z := X \otimes Y$ and 
  set $Z_n := \otimes^n Z$. A channel $f \colon M \to Z$ naturally 
  gives us a channel $M \to Z_n$ by the composition 
  \[M \xrightarrow{\mathbf{copy}^n_M} \otimes^n M \xrightarrow{\otimes^n f} Z_n.\]
  Here we abuse notation and 
  write 
  $\mathbf{copy}^n_M$ for the 
  composition
  \[M \xrightarrow{\mathbf{copy}_M} M \otimes M
  \xrightarrow{\mathrm{id} \otimes \mathbf{copy}_M} M \otimes M \otimes M \ldots \xrightarrow{\mathrm{id} \otimes \ldots \otimes \mathbf{copy}_M} \otimes^n M.\]
  Note that the map $M \to Z$ is obtained naturally from 
  the model $f \colon M \odot X \to Y$ via the composition
  \[M \odot I \xrightarrow{\mathrm{id} \odot \pi_X} M \otimes X \xrightarrow{f \otimes \mathrm{id}} 
  Y \otimes X \simeq X \otimes Y.\] 
 The Bayesian inversion with respect to $\pi_{M,0}$ 
 implies a channel $Z_n \to M$. 
The composition 
\[ I \xrightarrow{(\delta_{x_1} \otimes \delta_{y_1}) \otimes \ldots \otimes (\delta_{x_n} \otimes \delta_{y_n})} Z_n \to M \] 
 defines a state on $M$ which we refer to as the batch update 
 with respect to $\mathbf{T}$ and denote $\pi_{M,\mathbf{T}}$.
 
 It is natural in this setting to ask if there is a relationship between the sequential and batch updates
 or what hypothesis we must impose on $\mathcal{C}$ to ensure that they are equal.
 We leave this question open to further investigation and show that
 the two are equal in the case of FinStoch
  (cf. Examples \ref{es : batch updates} and \ref{es : sequential updates}).
\\ 

\noindent \textbf{String diagrams:} In this paper, we orient our string diagrams
from top to bottom and display monoidal product as moving from left to right. 
All string diagrams have been made using DisCoPy \cite{discopy}. 
 
\section{Preliminaries}

\subsection{Bayesian Inference} \label{section : bayesian inference prelims}

    Our goal in this section is to give a high level overview of the 
 approach Bayesian learning takes in modelling
  the relationship between random variables of interest. 
 We use the lecture \cite{Tbayesinf} as a reference to the material in this section. 
 
   Let us illustrate the approach with an example. 
   Let $\mathbf{x} = (x_1,\ldots,x_n)$
    be a random vector and $y$ be a random variable. 
   We assume that $y$ is related to $\mathbf{x}$ via the equation 
   \[ y = f(\mathbf{x}) + \epsilon \] 
   where $\epsilon$ is normally distributed with mean $0$ and variance $\sigma^2$ and 
   $f(\mathbf{x}) := \beta^T \mathbf{x}$ where $\beta \in \mathbb{R}^n$. 
   For simplicity, we assume that the quantity $\sigma$ is a known constant. 
   
   Let us suppose we are given a training set
    $\mathbf{T} := \{(\mathbf{x}_1,y_1),\ldots,(\mathbf{x}_N,y_N)\}$. 
    One can divide the Bayesian approach into two parts. 
 As outlined in the introduction we 
 treat $\beta$ as a random variable and
 choose a prior distribution  
 which encodes our pre-conceived beliefs about $\beta$. 
 Then using the set $\mathbf{T}$, we update the 
 distribution on $\beta$ to get its posterior distribution by applying
 Bayes' rule. 
 The posterior distribution can then be used to obtain 
 point estimates of $\beta$. 
 
  Let us assume an improper prior on $\beta$ i.e. $q(\beta) = 1$. 
  Applying Bayes' rule gives 
  \[ p(\beta | \mathbf{T}) \propto p(\mathbf{T} | \beta) q(\beta) \]
  Observe that our assumption on the nature of the underlying data implies 
  that $p(y|\mathbf{x})$ is normally distributed with mean $\beta^T \mathbf{x}$
  and variance $\sigma^2$. Assuming that the training data is independent, we get 
  \[p(\mathbf{T}|\beta) \propto \prod_i p(y_i| \beta, \mathbf{x}_i) q(\beta). \] 
  We assumed above that the distribution on $\mathbf{x}$ is known and hence in effect 
  we condition every random variable with respect to $\mathbf{x}$. 
  Hence, 
  \[p(\mathbf{T} | \beta) =  \frac{1}{({2\pi})^\frac{N}{2}\sigma^N } \mathrm{exp}(-\frac{\sum_i (y_i - \beta^T \mathbf{x}_i)^2}{2\sigma^2})\]
  and 
  \[p(\beta | \mathbf{T}) =  \frac{1}{ ({2\pi})^\frac{N}{2}\sigma^N} \mathrm{exp}(-\frac{\sum_i (y_i - \beta^T \mathbf{x}_i)^2}{2\sigma^2}) \]
  
   
   In this manner, we have updated the prior distribution on $\beta$ in accordance with the given data. 
  We now sketch how one might perform inference. Let $\mathbf{x}_*$ be a
  data point that does not necessarily belong to the training set $\mathbf{T}$. 
   We would like to find $p(y_*| \mathbf{x}_*, \mathbf{T})$. 
  We outline two ways on how to proceed. \\
  
  \begin{enumerate}
  \item Let \[ \beta_{\mathrm{MAP}} := \mathrm{argmax}_\beta p(\beta | \mathbf{T}) \]
    This is called the \emph{maximum a posteriori} estimate of the parameter $\beta$. 
    It represents a single best guess for the parameter given the training data. 
    We then set 
    \[ p(y_*| \mathbf{x}_*, \mathbf{T}) := p(y_* | \mathbf{x}_*, \beta_{\mathrm{MAP}}).\]
     Observe that in the context of our ongoing example, 
     $\beta_{\mathrm{MAP}}$ coincides with $\beta_{\mathrm{OLS}}$ 
    - the ordinary least squares estimate. \\
 
  \item While the MAP estimate of $\beta$ represents a suitable guess for the parameter, 
  when performing inference we can do better by leveraging our knowledge of the entire 
  posterior distribution. 
  The {true Bayesian} way is to integrate over the posterior distribution $\beta$ i.e.
  \[ p(y_*| \mathbf{x}_*, \mathbf{T}) 
  := \int p(y_* | \mathbf{x}_*, \beta) p(\beta | \mathbf{T} ) d\beta.\]
  In practice, owing to the intractability of the integral above due to the fact that 
  the posterior distribution is not likely analytic a possible solution is
  to proceed by employing a suitable approximation strategy. 
  See for instance \cite{approxintegralsevans} or \cite{variationalvectormachines}.
  \end{enumerate} 
   
\begin{rem}
 \emph{Observe that the posterior distribution on $\beta$ with regards 
 to the training set $\mathbf{T}$ can also be obtained 
 by sequential updates. This is due to the following phenomenon. 
 Suppose $\mathbf{T}$ can be split into two training sets 
 $\mathbf{T}_1$ and $\mathbf{T}_2$ where
 $\mathbf{T}_1 := \{(\mathbf{x}_1,y_1),\ldots,(\mathbf{x}_r,y_r)\}$ 
 and 
 $\mathbf{T}_2 := \{(\mathbf{x}_{r+1},y_{r+1}),\ldots,(\mathbf{x}_N,y_N)\}$. 
Let $q_1$ be the posterior distribution 
$p(\beta | \mathbf{T}_1)$ on $\beta$ obtained by updating the prior 
$q$.
Likewise, let $q_2$ be the posterior distribution
 on $\beta$ obtained by updating the prior 
$q_1$ using the dataset $\mathbf{T}_2$.
A simple calculation shows that 
the distribution $q_2$ coincides with the posterior 
obtained by updating $q$ using the entire 
dataset $\mathbf{T}$.
In \S \ref{section : bayes updates}, 
we formulate a categorical version 
of the above observation.}
\end{rem} 

\subsection{Bayesian inversions and $\mathrm{PS}$} \label{section : bayesian inversions prelims}

    The theory of Markov categories provides a categorical framework within which 
    we can discuss and formalize notions from Probability theory. We use the notation and 
    conventions introduced in \cite{TFSyntheticApproach}. 
    Note that this notion of Markov categories coincides with that of affine CD-categories 
    as introduced by Cho and Jacobs in \cite{CJ2019disintegration}. 
 
 \begin{defi}
  \emph{A} Markov category \emph{is a semicartesian symmetric monoidal category} $(\mathcal{C},I,\otimes)$ 
  \emph{in which every object X is equipped with the structure of a commutative internal comonoid.
 We denote the comultiplication and 
 counit maps by}
  \[ \mathbf{copy}_X \colon X \to X \otimes X\] 
   \emph{and}
   \[ \mathbf{del}_X \colon X \to I \]
   \emph{respectively and require that they satisfy 
   certain natural conditions (cf. \cite[Definition 2.1]{TFSyntheticApproach}).}
   
        
 \end{defi}
 
The definition above suggest that one can think of
  a morphism $\pi_X \colon I \to X$ in 
  a Markov category $\mathcal{C}$ as a probability distribution on $X$. We refer 
  to $\pi_X$ as defining a \emph{state} on $X$. 
 Likewise, a morphism $f \colon X \to Y$ in $\mathcal{C}$ will be 
 called a \emph{channel}. 

 For the remainder of this section we fix a Markov category $\mathcal{C}$. 
 
 \begin{es} \label{es : markov category examples}
 \emph{We introduce two examples which we will continue to make reference to throughout the course of the
 paper.}
 \begin{enumerate}
 \item \emph{We define the category $\mathrm{FinStoch}$ to be the category 
 of finite sets with channels. A morphism $f \colon X \to Y$ 
 in $\mathrm{FinStoch}$ is given by a morphism of sets 
 $f \colon X \to \mathrm{Dist}(Y)$ where 
 $\mathrm{Dist}(Y)$ is the set of probability distributions on $Y$
 i.e.}
 \[ \mathrm{Dist}(Y) = \Big\{ \omega \colon Y \to [0,1] | \sum_{y \in Y} \omega(y) = 1 \Big\}. \]
 
 \emph{Given a set $X$, we define 
 $\mathbf{copy}_X$ to be the map that sends an element 
 $x \in X$ to the distribution on $X \times X$ which takes the value 
 $1$ at $(x,x)$ and $0$ everywhere else. 
 The terminal object $I$ is the set with a single element i.e. $I = \{*\}$
 and as a consequence we can identify $\mathrm{Dist}(I)$ 
 with
 $I$. 
 We define $\mathbf{del}_X$ to be the map 
that sends every element 
  $x \in X$ to $*$. 
 Lastly, given morphisms $f \colon X \to Y$ and $g \colon Y \to Z$, we 
 define the composition $f ; g \colon X \to Z$ to be such that 
 for every $x \in X$,
 \[(f ; g)(x) (z)  := \sum_{y \in Y} g(y)(z)f(x)(y). \] }
  \emph{Observe that $\mathrm{Dist}$ defines an endofunctor on the category of Finite sets. 
 One can check without difficulty that it is in fact a commutative monad. It follows that 
 $\mathrm{FinStoch}$ coincides with the Kleisli category $\mathrm{Kl}(\mathrm{Dist})$.}
 \item \emph{To deal with distributions 
 whose supports are not necessarily finite, 
 we introduce $\mathrm{Stoch}$ whose objects are 
 measurable spaces i.e. tuples of the form $(X, \Sigma_X)$ where $X$ is a set 
 and $\Sigma_X$ is a well defined $\sigma$-algebra on $X$. 
 Recall that the category of measurable spaces is endowed with a natural 
 symmetric monoidal structure 
 given by 
 \[(X,\Sigma_X) \otimes (Y,\Sigma_Y) := (X \times Y, \Sigma_X \otimes \Sigma_Y)\]
 where $\Sigma_X \otimes \Sigma_Y$ corresponds to the $\sigma$-algebra generated 
 by subsets of the form $U \times V$ where $U \in \Sigma_X$ and 
 $V \in \Sigma_Y$.}
 
 \emph{A morphism $(X,\Sigma_X) \to (Y,\Sigma_Y)$
  in $\mathrm{Stoch}$ is given by a map 
 \[f \colon X \times \Sigma_Y \to [0,1] \] such that 
 for every $S \in \Sigma_X$, the map $f(S,\_) \colon X \to [0,1]$ is measurable.
 Furthermore, we ask that for every $x$, $f(\_,x) \colon \Sigma_Y \to [0,1]$ is 
 a well defined probability measure. One checks that 
 $\mathrm{Stoch}$ is a symmetric monoidal category
 whose monoidal structure is inherited from the category of measurable spaces. 
 The unit object $I$ is the pair $(\{*\},\{\{*\},\emptyset\})$. 
 }
 
 \emph{As before, we must define copy and delete morphisms as well as specify how to compose 
 morphisms in Stoch. We do not define $\mathbf{del}$ as this is obvious from 
 the definition of $I$. 
 Given $(X,\Sigma_X) \in \mathrm{Ob}(\mathrm{Stoch})$, we define 
 \[ \mathbf{copy}_X \colon (X \times X) \times \Sigma_{X \times X}  \to [0,1] \]
 as the map that sends a pair $(S,x)$ to $1$ if $(x,x) \in S$ and 
 $0$ otherwise. Lastly, given morphisms
 $f \colon (X,\Sigma_X) \to (Y,\Sigma_Y)$ and 
 $g \colon (Y,\Sigma_Y) \to (Z,\Sigma_Z)$, 
 we define 
 $f ; g$ 
 to be the map 
 $X \times \Sigma_Z \to [0,1]$ to be given by 
 \[ (S,x) \mapsto \int g(S, y) f(dy,x)  \] 
 where we abuse notation and write $f(x)$ for the 
 measure $\Sigma_Y \to [0,1]$ given by $T \mapsto f(T,x)$.}
 \end{enumerate} 
 \end{es}

 \begin{defi} \label{def : disintegration and jointification} 
   \emph{Let $\pi_{X \otimes Y} \colon I \to X \otimes Y$ be a joint state. A} disintegration 
   \emph{of $\pi_{X \otimes Y}$ will be a pair consisting of a channel 
   $f \colon X \to Y$ and a state $\pi_X \colon I \to X$ such that 
   the following string diagrams are equal.}
   \beq
		\label{cond_dist}
		\begin{tikzpicture}[baseline=(0.base)]
\begin{pgfonlayer}{nodelayer}
\node (0) at (0, 1.5) {};
\node [] (1) at (0.0, 1.25) {};
\node [] (2) at (0.0, 0.0) {};
\node [style=none, fill=white, right, scale=0.7] (3) at (0.1, 1.0) {$X$};
\node [] (4) at (1.0, 1.25) {};
\node [] (5) at (1.0, 0.0) {};
\node [style=none, fill=white, right, scale=0.7] (6) at (1.1, 1.0) {$Y$};
\node [] (7) at (-0.25, 1.25) {};
\node [] (8) at (1.25, 1.25) {};
\node [] (9) at (1.25, 1.75) {};
\node [] (10) at (-0.25, 1.75) {};
\node [style=none, fill=white, scale=0.7] (11) at (0.5, 1.5) {$\pi_{X\otimes Y}$};
\node [style=none, fill=white] (12) at (2.25, 1.5) {=};
\node [] (13) at (3.75, 2.25) {};
\node [] (14) at (3.75, 1.75) {};
\node [style=none, fill=white, right, scale=0.7] (15) at (3.85, 2.0) {$X$};
\node [] (16) at (3.75, 1.5) {};
\node [] (17) at (3.25, 1.25) {};
\node [] (18) at (4.25, 1.25) {};
\node [] (19) at (3.25, 0.0) {};
\node [style=none, fill=white, right, scale=0.7] (20) at (3.35, 1.0) {$X$};
\node [] (21) at (4.25, 0.75) {};
\node [style=none, fill=white, right, scale=0.7] (22) at (4.35, 1.0) {$X$};
\node [] (23) at (4.25, 0.25) {};
\node [] (24) at (4.25, 0.0) {};
\node [style=none, fill=white, right, scale=0.7] (25) at (4.35, 0.0) {$Y$};
\node [circle, fill=black, scale=0.485] (26) at (3.75, 1.5) {};
\node [] (26) at (3.5, 2.25) {};
\node [] (27) at (4.0, 2.25) {};
\node [] (28) at (4.0, 2.75) {};
\node [] (29) at (3.5, 2.75) {};
\node [style=none, fill=white, scale=0.7] (30) at (3.75, 2.5) {$\pi_{X}$};
\node [] (31) at (4.0, 0.25) {};
\node [] (32) at (4.5, 0.25) {};
\node [] (33) at (4.5, 0.75) {};
\node [] (34) at (4.0, 0.75) {};
\node [style=none, fill=white, scale=0.7] (35) at (4.25, 0.5) {$f$};
\end{pgfonlayer}
\begin{pgfonlayer}{edgelayer}
\draw [in=90, out=-90] (1.center) to (2.center);
\draw [in=90, out=-90] (4.center) to (5.center);
\draw [-, fill={white}] (7.center) to (8.center) to (9.center) to (10.center) to (7.center);
\draw [in=90, out=-90] (13.center) to (14.center);
\draw [in=90, out=180, looseness=0.9391] (16.center) to (17.center);
\draw [in=90, out=0, looseness=0.9391] (16.center) to (18.center);
\draw [in=90, out=-90] (14.center) to (16.center);
\draw [in=90, out=-90] (17.center) to (19.center);
\draw [in=90, out=-90] (18.center) to (21.center);
\draw [in=90, out=-90] (23.center) to (24.center);
\draw [-, fill={white}] (26.center) to (27.center) to (28.center) to (29.center) to (26.center);
\draw [-, fill={white}] (31.center) to (32.center) to (33.center) to (34.center) to (31.center);
\end{pgfonlayer}
\end{tikzpicture}
	\eeq
   \emph{If every joint state admits 
   a disintegration then we say that the category $\mathcal{C}$ admits}
   conditional distributions.
   
   \emph{Likewise, 
  given a pair $f \colon X \to Y$ and $\psi \colon I \to X$, 
  we can easily define a state on $X \times Y$ via the following string diagram.}
  $$\begin{tikzpicture}[baseline=(0.base)]
    \begin{pgfonlayer}{nodelayer}
    \node (0) at (0, 1.5) {};
    \node [] (1) at (0.5, 2.25) {};
    \node [] (2) at (0.5, 1.75) {};
    \node [style=none, fill=white, right, scale=0.7] (3) at (0.6, 2.0) {$X$};
    \node [] (4) at (0.5, 1.5) {};
    \node [] (5) at (0.0, 1.25) {};
    \node [] (6) at (1.0, 1.25) {};
    \node [] (7) at (0.0, 0) {};
    \node [style=none, fill=white, right, scale=0.7] (8) at (0.1, 1.0) {$X$};
    \node [] (9) at (1.0, 0.75) {};
    \node [style=none, fill=white, right, scale=0.7] (10) at (1.1, 1.0) {$X$};
    \node [] (11) at (1.0, 0.25) {};
    \node [] (12) at (1.0, 0) {};
    \node [style=none, fill=white, right, scale=0.7] (13) at (1.1, 0.0) {$Y$};
    \node [circle, fill=black, scale=0.632] (14) at (0.5, 1.5) {};
    \node [] (14) at (0.25, 2.25) {};
    \node [] (15) at (0.75, 2.25) {};
    \node [] (16) at (0.75, 2.75) {};
    \node [] (17) at (0.25, 2.75) {};
    \node [style=none, fill=white, scale=0.7] (18) at (0.5, 2.5) {$\psi$};
    \node [] (19) at (0.75, 0.25) {};
    \node [] (20) at (1.25, 0.25) {};
    \node [] (21) at (1.25, 0.75) {};
    \node [] (22) at (0.75, 0.75) {};
    \node [style=none, fill=white, scale=0.7] (23) at (1.0, 0.5) {$f$};
    \end{pgfonlayer}
    \begin{pgfonlayer}{edgelayer}
    \draw [in=90, out=-90] (1.center) to (2.center);
    \draw [in=90, out=180, looseness=0.9391] (4.center) to (5.center);
    \draw [in=90, out=0, looseness=0.9391] (4.center) to (6.center);
    \draw [in=90, out=-90] (2.center) to (4.center);
    \draw [in=90, out=-90] (5.center) to (7.center);
    \draw [in=90, out=-90] (6.center) to (9.center);
    \draw [in=90, out=-90] (11.center) to (12.center);
    \draw [-, fill={white}] (14.center) to (15.center) to (16.center) to (17.center) to (14.center);
    \draw [-, fill={white}] (19.center) to (20.center) to (21.center) to (22.center) to (19.center);
    \end{pgfonlayer}
\end{tikzpicture}$$
  \emph{We refer to this as the jointification 
  of $f$ and $\psi$.}
 \end{defi} 
 
 We will require a more general version of conditionalization which permits 
 us to consider joint distributions parametrized by another object. 

 \begin{defi}
 \emph{Let $\mathcal{C}$
  be a Markov category. We say that $\mathcal{C}$
   has} conditionals \emph{if for every morphism 
   $s \colon A \to X \otimes Y$, there is 
   $t \colon X \otimes A \to Y$ such that 
   we have the following equality of string diagrams.}
   \beq
		\label{cond_eq}
		\begin{tikzpicture}[baseline=(0.base)]
\begin{pgfonlayer}{nodelayer}
\node (0) at (0, 2.5) {};
\node [] (1) at (0.5, 5.0) {};
\node [] (2) at (0.5, 2.75) {};
\node [style=none, fill=white, right, scale=0.7] (3) at (0.6, 5.0) {$A$};
\node [] (4) at (0.0, 2.25) {};
\node [] (5) at (0.0, 0.0) {};
\node [style=none, fill=white, right, scale=0.7] (6) at (0.1, 2.0) {$X$};
\node [] (7) at (1.0, 2.25) {};
\node [] (8) at (1.0, 0.0) {};
\node [style=none, fill=white, right, scale=0.7] (9) at (1.1, 2.0) {$Y$};
\node [] (10) at (-0.25, 2.25) {};
\node [] (11) at (1.25, 2.25) {};
\node [] (12) at (1.25, 2.75) {};
\node [] (13) at (-0.25, 2.75) {};
\node [style=none, fill=white, scale=0.7] (14) at (0.5, 2.5) {$s$};
\node [style=none, fill=white] (15) at (2.25, 2.5) {=};
\node [] (16) at (5.25, 5.0) {};
\node [] (17) at (5.25, 4.75) {};
\node [style=none, fill=white, right, scale=0.7] (18) at (5.35, 5.0) {$A$};
\node [] (19) at (5.25, 4.5) {};
\node [] (20) at (4.25, 4.25) {};
\node [] (21) at (6.25, 4.25) {};
\node [] (22) at (4.25, 3.75) {};
\node [style=none, fill=white, right, scale=0.7] (23) at (4.35, 4.0) {$A$};
\node [] (24) at (6.25, 0.75) {};
\node [style=none, fill=white, right, scale=0.7] (25) at (6.35, 4.0) {$A$};
\node [] (26) at (3.75, 3.25) {};
\node [] (27) at (3.75, 2.75) {};
\node [style=none, fill=white, right, scale=0.7] (28) at (3.85, 3.0) {$X$};
\node [] (29) at (5.25, 3.25) {};
\node [] (30) at (5.25, 1.75) {};
\node [style=none, fill=white, right, scale=0.7] (31) at (5.35, 3.0) {$Y$};
\node [] (32) at (3.75, 2.5) {};
\node [] (33) at (3.25, 2.25) {};
\node [] (34) at (4.25, 2.25) {};
\node [] (35) at (3.25, 0.0) {};
\node [style=none, fill=white, right, scale=0.7] (36) at (3.35, 2.0) {$X$};
\node [] (37) at (4.25, 0.75) {};
\node [style=none, fill=white, right, scale=0.7] (38) at (4.35, 2.0) {$X$};
\node [] (39) at (5.25, 1.5) {};
\node [] (40) at (5.25, 0.25) {};
\node [] (41) at (5.25, 0.0) {};
\node [style=none, fill=white, right, scale=0.7] (42) at (5.35, 0.0) {$Y$};
\node [circle, fill=black, scale=0.4] (43) at (5.25, 4.5) {};
\node [circle, fill=black, scale=0.4] (43) at (3.75, 2.5) {};
\node [circle, fill=black, scale=0.4] (43) at (5.25, 1.5) {};
\node [] (43) at (3.5, 3.25) {};
\node [] (44) at (5.5, 3.25) {};
\node [] (45) at (5.5, 3.75) {};
\node [] (46) at (3.5, 3.75) {};
\node [style=none, fill=white, scale=0.7] (47) at (4.25, 3.5) {$s$};
\node [] (48) at (4.0, 0.25) {};
\node [] (49) at (6.5, 0.25) {};
\node [] (50) at (6.5, 0.75) {};
\node [] (51) at (4.0, 0.75) {};
\node [style=none, fill=white, scale=0.7] (52) at (5.25, 0.5) {$t$};
\end{pgfonlayer}
\begin{pgfonlayer}{edgelayer}
\draw [in=90, out=-90] (1.center) to (2.center);
\draw [in=90, out=-90] (4.center) to (5.center);
\draw [in=90, out=-90] (7.center) to (8.center);
\draw [-, fill={white}] (10.center) to (11.center) to (12.center) to (13.center) to (10.center);
\draw [in=90, out=-90] (16.center) to (17.center);
\draw [in=90, out=180, looseness=0.5093] (19.center) to (20.center);
\draw [in=90, out=0, looseness=0.5093] (19.center) to (21.center);
\draw [in=90, out=-90] (17.center) to (19.center);
\draw [in=90, out=-90] (20.center) to (22.center);
\draw [in=90, out=-90] (21.center) to (24.center);
\draw [in=90, out=-90] (26.center) to (27.center);
\draw [in=90, out=-90] (29.center) to (30.center);
\draw [in=90, out=180, looseness=0.9391] (32.center) to (33.center);
\draw [in=90, out=0, looseness=0.9391] (32.center) to (34.center);
\draw [in=90, out=-90] (27.center) to (32.center);
\draw [in=90, out=-90] (33.center) to (35.center);
\draw [in=90, out=-90] (34.center) to (37.center);
\draw [in=90, out=-90] (30.center) to (39.center);
\draw [in=90, out=-90] (40.center) to (41.center);
\draw [-, fill={white}] (43.center) to (44.center) to (45.center) to (46.center) to (43.center);
\draw [-, fill={white}] (48.center) to (49.center) to (50.center) to (51.center) to (48.center);
\end{pgfonlayer}
\end{tikzpicture}
	\eeq
 \end{defi}   
 
 
 \begin{rem} 
   \emph{
   In the case of FinStoch, we see that a state $\pi_{X \times Y} \colon I \to X \times Y$ 
   corresponds to a probability distribution on $X \times Y$.
   Furthermore, a disintegration of $\pi_{X \times Y}$ is given by the 
   conditional distribution associated to the joint distribution as well as the 
   state $\pi_X \colon I \to X$ obtained by marginalizing $y$ in $\pi_{X \times Y}$.
   We define the conditional distribution
    $c \colon X \to Y$ explicitly by setting 
   \[c(x)(y) := \frac{\pi_{X \times Y}(x,y)}{\pi_X(x)}\]
   if $\pi_X(x)$ is not zero and in the event that $\pi_X(x) = 0$, 
   we set $c(x)(\_)$ to be any distribution on $Y$.} 
   
   \emph{Another category which admits conditionals is the category 
  BorelStoch which is a subcategory of the category Stoch whose objects 
  are standard Borel spaces.}
   \end{rem}
  
   Observe from the explicit calculation above that 
   the disintegration of a joint distribution is not necessarily 
   unique. 
   This leads us to the following definition.

  \begin{defi}
  \emph{Let $\pi_X \colon I \to X$ 
  be a state on an object $X \in \mathrm{Ob}(\mathcal{C})$. 
   Let $f,g \colon X \to Y$ be morphisms in $\mathcal{C}$. 
   We say that $f$ is almost surely equal to $g$ with 
   respect to $\pi_X$ or $f \sim_{\pi_X \mathrm{-a.s}} g$   
    if the following string diagrams coincide.}
    \[
		\begin{tikzpicture}[baseline=(0.base)]
\begin{pgfonlayer}{nodelayer}
\node (0) at (0, 1.5) {};
\node [] (1) at (0.5, 2.25) {};
\node [] (2) at (0.5, 1.75) {};
\node [] (3) at (0.5, 1.5) {};
\node [] (4) at (0.0, 1.25) {};
\node [] (5) at (1.0, 1.25) {};
\node [] (6) at (0.0, 0.0) {};
\node [] (7) at (1.0, 0.75) {};
\node [] (8) at (1.0, 0.25) {};
\node [] (9) at (1.0, 0.0) {};
\node [circle, fill=black, scale=0.485] (10) at (0.5, 1.5) {};
\node [] (10) at (0.25, 2.25) {};
\node [] (11) at (0.75, 2.25) {};
\node [] (12) at (0.75, 2.75) {};
\node [] (13) at (0.25, 2.75) {};
\node [style=none, fill=white, scale=0.7] (14) at (0.5, 2.5) {$\pi_{X}$};
\node [] (15) at (0.75, 0.25) {};
\node [] (16) at (1.25, 0.25) {};
\node [] (17) at (1.25, 0.75) {};
\node [] (18) at (0.75, 0.75) {};
\node [style=none, fill=white, scale=0.7] (19) at (1.0, 0.5) {$f$};
\node [style=none, fill=white] (20) at (2.25, 1.5) {=};
\node [] (21) at (3.75, 2.25) {};
\node [] (22) at (3.75, 1.75) {};
\node [] (23) at (3.75, 1.5) {};
\node [] (24) at (3.25, 1.25) {};
\node [] (25) at (4.25, 1.25) {};
\node [] (26) at (3.25, 0.0) {};
\node [] (27) at (4.25, 0.75) {};
\node [] (28) at (4.25, 0.25) {};
\node [] (29) at (4.25, 0.0) {};
\node [circle, fill=black, scale=0.485] (30) at (3.75, 1.5) {};
\node [] (30) at (3.5, 2.25) {};
\node [] (31) at (4.0, 2.25) {};
\node [] (32) at (4.0, 2.75) {};
\node [] (33) at (3.5, 2.75) {};
\node [style=none, fill=white, scale=0.7] (34) at (3.75, 2.5) {$\pi_{X}$};
\node [] (35) at (4.0, 0.25) {};
\node [] (36) at (4.5, 0.25) {};
\node [] (37) at (4.5, 0.75) {};
\node [] (38) at (4.0, 0.75) {};
\node [style=none, fill=white, scale=0.7] (39) at (4.25, 0.5) {$g$};
\end{pgfonlayer}
\begin{pgfonlayer}{edgelayer}
\draw [in=90, out=-90] (1.center) to (2.center);
\draw [in=90, out=180, looseness=0.9391] (3.center) to (4.center);
\draw [in=90, out=0, looseness=0.9391] (3.center) to (5.center);
\draw [in=90, out=-90] (2.center) to (3.center);
\draw [in=90, out=-90] (4.center) to (6.center);
\draw [in=90, out=-90] (5.center) to (7.center);
\draw [in=90, out=-90] (8.center) to (9.center);
\draw [-, fill={white}] (10.center) to (11.center) to (12.center) to (13.center) to (10.center);
\draw [-, fill={white}] (15.center) to (16.center) to (17.center) to (18.center) to (15.center);
\draw [in=90, out=-90] (21.center) to (22.center);
\draw [in=90, out=180, looseness=0.9391] (23.center) to (24.center);
\draw [in=90, out=0, looseness=0.9391] (23.center) to (25.center);
\draw [in=90, out=-90] (22.center) to (23.center);
\draw [in=90, out=-90] (24.center) to (26.center);
\draw [in=90, out=-90] (25.center) to (27.center);
\draw [in=90, out=-90] (28.center) to (29.center);
\draw [-, fill={white}] (30.center) to (31.center) to (32.center) to (33.center) to (30.center);
\draw [-, fill={white}] (35.center) to (36.center) to (37.center) to (38.center) to (35.center);
\end{pgfonlayer}
\end{tikzpicture}
	\]
    \end{defi}  
 
   Observe that if $\pi_{X \times Y} \colon I \to X \times Y$ is a state 
  and $\pi_X \colon I \to X$ is the associated marginal then 
  if $f,g \colon X \to Y$ are channels such that 
  $(f,\pi_X)$ and $(g,\pi_X)$ are both disintegrations 
  with respect to $\pi_{X \times Y}$ then 
  $f$ is $\pi_X$-almost surely equal to $g$.   

  \begin{defi}
  \emph{Let $\pi_X \colon I \to X$ 
  be a state on an object $X \in \mathrm{Ob}(\mathcal{C})$. 
  Let $f \colon X \to Y$ be a channel. The 
  Bayesian inversion of $f$ with respect to $\pi_X$ is a 
  channel $f^\dag_{\pi_X} \colon Y \to X$ such that we have the following equality of string diagrams.
  We say that $\mathcal{C}$ admits Bayesian inversions 
  if for every state $\pi_X \colon I \to X$ and
  $f \colon X \to Y$ we have a Bayesian inversion $f_{\pi_X}^\dag \colon Y \to X$.} 
  \end{defi} 
  
  Note that we can rephrase the definition above in 
  terms of disintegrations as follows. 
  Indeed, if $c$ and $\pi_X$ are as in the definition then 
  the Bayesian inversion $c^\dag_{\pi_X}$ can be obtained by 
  disintegrating the joint distribution $\pi_{X \times Y} \colon I \to Y \times X$ 
  obtained by swapping the integration of the pair $(c,\pi_X)$. 

  \begin{rem}
   \emph{As for disintegrations, we see that Bayesian inversions are not necessarily unique. 
   However, if $c_1,c_2$ are Bayesian inversions 
   of a channel $c \colon X \to Y$ 
   with respect to a state $\pi_X \colon I \to X$ 
   then $c_1$ is almost surely equal to $c_2$.}  
  \end{rem}
  
  
  \subsubsection{The category $\mathrm{PS}$} \label{section : PS}
  
  A crucial requirement of our set up that allows us to define 
  the BayesLearn functor similar to the Gradient learn functor 
  from \cite{CGgradientlearning} is that Bayesian inversions must 
  compose strictly. To this end, we must move away from 
  working with equivalence classes of almost surely equal morphisms 
  with respect to a given state and instead use the category
  $\mathrm{PS}$ (cf. \cite[Definition 13.8]{TFSyntheticApproach}).

  \begin{defi} 
 \emph{Suppose that 
  $\mathcal{C}$ is causal (cf.\cite[Definition 11.31]{TFSyntheticApproach}). Then 
  the category $\mathrm{ProbStoch}(\mathcal{C})$ 
  is defined as follows. 
  The objects of $\mathrm{ProbStoch}(\mathcal{C})$
  consist of pairs $(X,\pi_X)$
  where $X \in \mathrm{Ob}(\mathcal{C})$ and 
  $\pi_X \colon I \to X$ is a state on $X$. 
   A
    morphism in 
    $\mathrm{ProbStoch}(\mathcal{C})$
     between objects $(X,\pi_X)$ and $(Y,\pi_Y)$ consists of a map
     $f \colon X \to Y$ in $\mathcal{C}$ 
     satisfying $\pi_X ; f = \pi_Y$ modulo 
     $\pi_X$-a.s. equality, with 
     composition inherited from $\mathcal{C}$ i.e.
     \[ \mathrm{ProbStoch(\mathcal{C})}(X,Y) := 
    { \{f \in \mathcal{C}(X,Y) |  \pi_X ; f = \pi_Y \}} / { \sim_{\pi_X-a.s}} \]
     }
  \end{defi} 
 
  As mentioned in the introduction, we 
  write $\mathrm{PS}$ in place of $\mathrm{ProbStoch}$ for 
  ease of notation.

  \begin{rem}
    \emph{Note that the categories $\mathrm{Stoch}$ and $\mathrm{FinStoch}$ 
    are both causal. It is important to observe that if a category admits conditionals 
    then it is causal. However the converse is not true
    (cf. \cite[11.34, 11.35]{TFSyntheticApproach}).    
      }
  \end{rem} 
  
  \begin{rem} \label{rem : PS is SMC}
 \emph{Recall from \cite[Proposition 13.9(a)]{TFSyntheticApproach} that if
 $\mathcal{C}$ is causal then 
 $\mathrm{PS}(\mathcal{C})$ is 
 symmetric monoidal. In this case, the unit object is given 
 by the pair $(I,\iota)$ where 
 $\iota \colon I \to I$ is the identity map in $\mathcal{C}$.}
 \end{rem}     
       
\subsection{The Para construction} \label{section : the para construction}

 Recall that our goal is to understand conditional distributions 
 between random variables or morphisms
  in a Markov category that satisfies certain constraints.
 In this setting, we model a conditional distribution $p(y|x)$ using 
 a parametric function $f(x ;\theta)$ while our learning algorithm updates $\theta$
 using the given training set. The notion of parametrized function 
 has a natural formulation in category theory 
 which we call $\mathbf{Para}$ which was first introduced 
 in \cite{FSBackprop}. We use the more generalized version of this construction 
 which can be found in \cite{CGHR2021foundations}. 
 Observe that the type of the parameter $\theta$ need not coincide with that 
 of the variable. To ensure that this observation is 
 preserved in the categorical formulation, we 
 make use of the notion of actegories. 
 
 \begin{defi} \label{def : actegory}
  \emph{Let $(\mathcal{M},J,\star)$ be a symmetric monoidal category and 
  let $\mathcal{C}$ be a category.}
  \begin{enumerate}
  \item \emph{We say that $\mathcal{C}$ is an} $\mathcal{M}$-actegory \emph{if 
  we have a strong monoidal functor $\Phi \colon \mathcal{M} \to \mathrm{End}(\mathcal{C})$ where 
  $\mathrm{End}(\mathcal{C})$ is the 
  category of endofunctors on $\mathcal{C}$ for which the monoidal product is 
  given by composition.}
 \emph{Given $M \in \mathrm{Ob}(\mathcal{M})$ and $X \in \mathcal{C}$,
   we write $M \odot X := \Phi(M)(X)$.}
   \item \emph{We say that $\mathcal{C}$ is a} symmetric monoidal $\mathcal{M}$-actegory
   \emph{if in addition to 
   being an $\mathcal{M}$-actegory 
  $\mathcal{C}$ is endowed with natural isomorphisms}
  \[ \kappa_{M,X,Y} \colon M \odot ( X \otimes Y) \simeq X \otimes (M \odot Y) \]
  \emph{satisfying coherence laws reminiscent of the laws of a 
  costrong comonad.} 
   \end{enumerate}
 \end{defi}
 
 \begin{rem} \label{rem : mixed interchanger}
  \emph{ Part (2) of Definition \ref{def : actegory} is from 
   \cite[\S 2.1 Definition 4]{CGHR2021foundations}.
  As in this reference, we point out that if $(\mathcal{C},I,\otimes)$ is a symmetric 
  monoidal $(\mathcal{M},J,\star)$-actegory 
  then we have natural isomorphisms}
  \[\alpha_{M,X,Y} \colon M \odot (X \otimes Y) \simeq (M \odot X) \otimes Y \]
  \emph{and}
  \[\iota_{M,N,X,Y} \colon (M \star N) \odot (X \otimes Y) \simeq (M \odot X) \otimes (N \odot Y) \]
  \emph{which are called the 
  mixed associator and the mixed interchanger respectively.} 
 \end{rem}

  \begin{defi} \cite[\S Definition 2]{CGHR2021foundations}
 \emph{Let $\mathcal{M}$ be a symmetric monoidal category and let
  $\mathcal{C}$ be an $\mathcal{M}$-actegory. 
  The bicategory} $\mathbf{Para}_{\mathcal{M}}(\mathcal{C})$ 
  \emph{is defined as follows.}
  \begin{itemize}
  \item $\mathrm{Ob}(\mathbf{Para}_{\mathcal{M}}(\mathcal{C})) := \mathrm{Ob}(\mathcal{C})$.
  \item \emph{A 1-cell $f \colon X \to Y$ in $\mathbf{Para}_{\mathcal{M}}(\mathcal{C})$ consists 
  of a pair $(P,\phi)$ where $P \in \mathrm{Ob}(\mathcal{M})$ and 
  $\phi \colon P \odot X \to Y$ is a morphism in $\mathcal{C}$.   
   \item Let $(P,\phi) \in \mathbf{Para}_{\mathcal{M}}(\mathcal{C})(X,Y)$ and 
  $(Q,\psi) \in \mathbf{Para}_{\mathcal{M}}(\mathcal{C})(Y,Z)$. 
  The composition $(P,\phi) ; (Q,\psi)$ is the map in $\mathcal{C}$
  \[Q \odot ( P \odot X) \to Z \]
  given by 
  \[ Q \odot ( P \odot X) \xrightarrow{\phi} Q \odot Y \xrightarrow{\psi} Z \]}
  \item \emph{Let $(P,\phi), (Q,\psi) \in \mathbf{Para}_{\mathcal{M}}(\mathcal{C})(X,Y)$. 
  A 2-cell $\alpha \colon (P,\phi) \to (Q,\psi)$ is given by a morphism 
  $\alpha' \colon Q \to P$ such that the following diagram commutes.}
  $$
\begin{tikzcd} [row sep = large, column sep = large] 
Q \odot X \arrow[r, "\alpha' \odot \mathrm{id}_X"] \arrow[d, "\psi"] &
P \odot X \arrow[dl, "\phi"] \\
Y &  
\end{tikzcd}
$$

\item \emph{The identity and composition in the category 
$\mathbf{Para}_{\mathcal{M}}(\mathcal{C})(X,Y)$ are inherited from the identity and 
composition in $\mathcal{M}$.}
  \end{itemize} 
  \end{defi} 

By \cite[Proposition 3, \S 2]{CGHR2021foundations}, 
$\mathbf{Para}_{\mathcal{M}}(\_)$ defines a pseudo-monad on the 
category $\mathcal{M}-\mathbf{Mod}$ of $\mathcal{M}$-actegories. 
In particular, if we have a functor 
$$F \colon \mathcal{C} \to \mathcal{D}$$
 we get 
an associated 
functor 
\[ \mathbf{Para}_{\mathcal{M}}(F) \colon \mathbf{Para}_{\mathcal{M}}(\mathcal{C}) \to \mathbf{Para}_{\mathcal{M}}(\mathcal{D}) \]
We make use of this fact when we define the BayesLearn functor in \S \ref{section : bayes learn}.

\section{Bayes Learn} \label{section : bayes learn}

  In this section we outline the construction 
  of the functor BayesLearn which aims to capture the essential features of Bayesian learning.
  However in order for us to discuss these results, we 
  require certain preliminary ideas which allow
  us to better understand the actegory structure 
  on $\mathrm{PS}(\mathcal{C})$ where 
  $\mathcal{C}$ is a Markov category. 
  \subsection{ Inducing the $\mathcal{M}$-actegory structure}

   While we work with the flexibility provided by $\mathbf{Para}_{\mathcal{M}}$, we 
   must be careful to ensure that the categories to which we apply 
   $\mathbf{Para}_{\mathcal{M}}(\_)$
   are endowed with the structure of an $\mathcal{M}$-actegory. 
    To this end, we must make certain modifications to the category $\mathcal{M}$ itself as we 
    would 
    firstly like to ensure that parameter spaces have a well defined prior associated to them.

 \begin{rem}     
 \emph{ It follows directly from the definition of an actegory that if 
 $r \colon M \to N$ in $\mathcal{M}$ and $f \colon X \to Y$ in
      $\mathcal{C}$
      then
       the following diagram commutes.}
      $$
\begin{tikzcd} [row sep = large, column sep = large] 
 M \odot X \arrow[r, "M \odot f"] \arrow[d, "r \odot X"] &
M \odot Y \arrow[d, "r \odot Y"] \\
N \odot X \arrow[r, "N \odot f"] & N \odot Y  
\end{tikzcd}
$$ 
\emph{where the vertical morphisms are 
due to the  natural transformation between the functors 
$M \odot \_ $ and $N \odot \_$ induced by 
the morphism $M \to N$. We write 
$f \odot g$ to denote the composition
$M \odot X \to M \odot Y  \to N \odot Y$ or equivalently 
$M \odot X \to N \odot X  \to N \odot Y$.} 
\end{rem}
      
    The following technical condition is required in the proof of 
      Lemma \ref{lem : PS(M) actegories}. 
      
   \begin{defi} \label{def : agreement}
   \emph{ Let ($\mathcal{M},J,\star)$
    and $(\mathcal{C},I,\otimes)$ be Markov categories and 
   let $\mathcal{C}$ be a symmetric monoidal $\mathcal{M}$-actegory (cf. Definition \ref{def : actegory}). 
   We say that $\mathcal{C}$
   is in} agreement \emph{with $\mathcal{M}$ if}
    \emph{for every $P \in \mathcal{M}$ and 
     $X \in \mathcal{C}$, 
    the following diagram is commutative and natural 
    in both $P$ and $X$.}
$$
\begin{tikzcd} [row sep = 8ex, column sep = 10em] 
 P \odot X \arrow[r, "\mathrm{copy}_P \odot \mathrm{copy}_X"] \arrow[d,equal] &
(P \star P) \odot (X \otimes X) \arrow[d, "\iota_{P,P,X,X}"] \\
P \odot X \arrow[r, "\mathrm{copy}_{P \odot X}"] & (P \odot X) \otimes (P \odot X) 
\end{tikzcd}
$$ 
\emph{where $\iota_{P,P,X,X}$ is the mixed interchanger as 
introduced in \ref{rem : mixed interchanger}.}
   \end{defi}
   
    
       
       Recall from \ref{section : bayesian inference prelims},
       that a categorical formulation of Bayesian learning must 
      capture the notion of state on the parameter as well as a means
      to update it via Bayesian inversion. 
      It is hence natural under these circumstances to 
      utilize $\mathrm{PS}(\mathcal{M})$
      as the category of parameters. 
      A brief discussion of the PS construction 
      was made in \ref{section : PS}.

   \begin{lem} \label{lem : PS(M) actegories}
     Let $(\mathcal{M},J,\star)$ be a causal Markov category 
     and $(\mathcal{C},I,\otimes)$ be 
     a Markov category that is in agreement with $\mathcal{M}$.
      The category $\mathrm{PS}(\mathcal{C})$ is a
      $\mathrm{PS}(\mathcal{M})$-actegory.
         \end{lem}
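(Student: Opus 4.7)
The plan is to construct a strong monoidal functor $\Psi : \mathrm{PS}(\mathcal{M}) \to \mathrm{End}(\mathrm{PS}(\mathcal{C}))$ by lifting the given strong monoidal functor $\Phi : \mathcal{M} \to \mathrm{End}(\mathcal{C})$ through the PS-construction. On objects I would define
\[ (M, \pi_M) \odot (X, \pi_X) := (M \odot X,\; \pi_M \odot \pi_X), \]
where the state $\pi_M \odot \pi_X : I \to M \odot X$ is the composite $I \xrightarrow{\sim} J \odot I \xrightarrow{\pi_M \odot \pi_X} M \odot X$ built from the strong monoidal coherence of $\Phi$. On morphisms, send a pair $r : (M,\pi_M) \to (N,\pi_N)$ in $\mathrm{PS}(\mathcal{M})$ and $f : (X,\pi_X) \to (Y,\pi_Y)$ in $\mathrm{PS}(\mathcal{C})$ to the underlying $\mathcal{C}$-morphism $r \odot f : M \odot X \to N \odot Y$. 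State preservation $(\pi_M \odot \pi_X); (r \odot f) = \pi_N \odot \pi_Y$ then follows by applying $\Phi$ to the separate state-preservation equations $\pi_M ; r = \pi_N$ and $\pi_X ; f = \pi_Y$ together with the naturality of the action.

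The central — and only genuinely nontrivial — step is showing that this assignment descends to the quotients by almost-sure equality on both sides. Suppose $r \sim_{\pi_M\text{-a.s.}} r'$ in $\mathcal{M}$ and $f \sim_{\pi_X\text{-a.s.}} f'$ in $\mathcal{C}$; I claim $r \odot f$ is $(\pi_M \odot \pi_X)$-almost surely equal to $r' \odot f'$. To see this, unfold the defining string diagram: it requires precomposing with $\pi_M \odot \pi_X$ followed by $\mathrm{copy}_{M \odot X}$. By the agreement hypothesis of Definition \ref{def : agreement}, this composite coincides with $(\mathrm{copy}_M \circ \pi_M) \odot (\mathrm{copy}_X \circ \pi_X)$ followed by the mixed interchanger $\iota_{M,M,X,X}$. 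The diagram now factors through $(M \star M) \odot (X \otimes X)$, where the $\pi_M$-a.s.\ equality of $r,r'$ and the $\pi_X$-a.s.\ equality of $f,f'$ can be invoked separately on each tensor factor. Transporting the resulting equality back through $\iota_{M,M,X,X}$ delivers the desired a.s.\ equality of $r \odot f$ and $r' \odot f'$ with respect to $\pi_M \odot \pi_X$. The symmetric argument handles the case where $r,r'$ are replaced by the identity and only $f,f'$ vary (or vice versa), which is what is really needed for well-definedness in each variable of the bifunctor.

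Once well-definedness is established, functoriality of $\Psi$ in both slots is inherited from that of $\Phi$ because the underlying $\mathcal{C}$-morphisms are unchanged by passing to the PS-quotient. The strong monoidal structure on $\Psi$ — namely coherent isomorphisms $\Psi((M,\pi_M) \star (N,\pi_N)) \simeq \Psi(M,\pi_M) \circ \Psi(N,\pi_N)$ and $\Psi(J, \mathrm{id}_J) \simeq \mathrm{id}$ — descends from the strong monoidal structure of $\Phi$; one only checks that the underlying coherence isomorphisms in $\mathcal{C}$ are state-preserving, which follows from their naturality applied to the relevant states. The main obstacle is the well-definedness step in the second paragraph: it is the unique place where the agreement condition enters, and once that bridge is in place every remaining verification is routine bookkeeping with coherence data already available from the ambient $\mathcal{M}$-actegory structure on $\mathcal{C}$.
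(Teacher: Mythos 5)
Your proposal is correct and follows essentially the same route as the paper's proof: the same object-level definition of the action, and the same use of the agreement condition together with the mixed interchanger to factor the almost-sure-equality string diagram through $(M \star M) \odot (X \otimes X)$, after which the strong monoidal coherences descend routinely. The only (minor) difference is that you also verify descent in the $\mathcal{M}$-variable, i.e.\ for a.s.-equal morphisms $r \sim_{\pi_M} r'$, which the paper leaves implicit in its ``one checks without difficulty'' step.
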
 
   \begin{proof} 
   Let $(P,\pi_P) \in \mathrm{Ob}(\mathrm{PS}(\mathcal{C}))$. We
   define an endofunctor 
   \[(P,\pi_P) \odot \_ \colon \mathrm{PS}(\mathcal{C}) \to \mathrm{PS}(\mathcal{C}) \] as follows.
   Firstly, let $(X,\pi_X)$ be an object in $\mathrm{PS}(\mathcal{C})$. 
   We set $(P,\pi_P) \odot (X,\pi_X)$ to be the pair 
   \[ I \xrightarrow{\sim} J \odot I \xrightarrow{\pi_P \odot \pi_X} P \odot X\] 
   where the first isomorphism is due to the $\mathcal{M}$-actegory
   structure on $\mathcal{C}$. 
   Observe that the composition 
   above coincides with 
   \begin{align} \label{obs1}
    I \xrightarrow{\sim} J \odot I \xrightarrow{\pi_P \odot \mathrm{id}} P \odot I \xrightarrow{ \mathrm{id} \odot \pi_X} P \odot X.
    \end{align}
   We check that $(P,\pi_P) \odot \_$ defines a functor.
       Suppose
   $f \colon (X,\pi_X) \to (Y,\pi_Y)$ in $\mathrm{PS}(\mathcal{C})$.
   The morphism $\pi_Y \colon I \to Y$ coincides with the composition 
   $I \xrightarrow{\pi_X} X \xrightarrow{\tilde{f}} Y$
   where $\tilde{f}$ is a representative of the equivalence class of 
   $f$ in $\mathcal{C}$.
   Applying $P \odot \_$ gives 
   \[ P \odot I \xrightarrow{\mathrm{id} \odot \pi_X} P \odot X \xrightarrow{\mathrm{id} \otimes \tilde{f} } P \odot Y. \]
  Hence we get a sequence of morphisms
  \begin{align} \label{composition1}
  I \xrightarrow{\sim} J \odot I \xrightarrow{\pi_P \odot \mathrm{id}}  P \odot I \xrightarrow{\mathrm{id} \odot \pi_X} P \odot X \xrightarrow{\mathrm{id} \odot \tilde{f}} P \odot Y. 
  \end{align}
  Using the observation in Equation (\ref{obs1})
  and the fact that 
  $\pi_X ; \tilde{f} = \pi_Y$, we deduce that 
  Equation (\ref{composition1}) coincides with 
  $(P,\pi_P) \odot (Y,\pi_Y)$. 
  We have thus shown that we have a 
  map 
  \[(P,\pi_P) \odot \tilde{f} \colon (P,\pi_P) \odot (X,\pi_X) \to (P,\pi_P) \odot (Y,\pi_Y) \]
   in $\mathrm{PS}(\mathcal{C})$. 
 Note that we must show that the map 
  $(P,\pi_P) \odot (X,\pi_X) \to (P,\pi_P) \odot (Y,\pi_Y)$ we have defined above is independent of our choice of $\tilde{f}$. 
 Hence we verify 
  that if 
  $h,g \colon X \to Y$ are morphisms in $\mathcal{C}$ such that 
  $h \sim_{\pi_X \mathrm{-a.s}} g$ then 
  $(P,\pi_P) \odot h \sim_{\pi_P \odot\pi_X \mathrm{-a.s}} (P,\pi_P) \odot g$. 
  Since $h \sim_{\pi_X\mathrm{-a.s}} g$, we get that 
  the compositions 
  \[ A  := I \to X \to X \otimes X \xrightarrow{h \otimes \mathrm{id}} Y \otimes X\]
  and 
   \[B := I \to X \to X \otimes X \xrightarrow{g \otimes \mathrm{id}} Y \otimes X\]
   coincide.
   Consider the sequence 
   \[C := J \xrightarrow{\pi_P} P \xrightarrow{\mathrm{copy}_P} P \otimes P \xrightarrow{\mathrm{id}} P \otimes P \]
   in $\mathcal{M}$. 
   Since $A = B$, we see that 
   $C \odot A = C \odot B$. 
   However, this implies that 
   \[I \to P \odot X \xrightarrow{\mathrm{copy}_{P} \odot \mathrm{copy}_{X}} (P \star P) \odot (X \otimes X) 
   \xrightarrow{\mathrm{id} \odot (h \otimes \mathrm{id})}
    (P \star P) \odot (Y \otimes X) \]
   coincides with 
   \[I \to P \odot X \xrightarrow{ \mathrm{copy}_{P} \odot \mathrm{copy}_{X}} (P \star P) \odot (X \otimes X) \xrightarrow {\mathrm{id} \odot (g \otimes \mathrm{id})} (P \star P) \odot (Y \otimes X). \]
   It follows that 
    \[I \to P \odot X \xrightarrow{\mathrm{copy}_{P \odot X}} (P \odot X) \otimes (P \odot X) 
   \xrightarrow{(P \odot h) \otimes \mathrm{id}}
    (P \odot Y) \otimes (P \odot X) \]
    coincides with 
    \[I \to P \odot X \xrightarrow{\mathrm{copy}_{P \odot X}} (P \odot X) 
    \otimes (P \odot X) \xrightarrow {(P \odot g) \otimes \mathrm{id}} (P \odot Y) \otimes (P \odot X). \]
    This is a consequence of the naturality of the 
    agreement (cf. Definition \ref{def : agreement})
    and mixed interchanger morphisms 
    (cf. Remark \ref{rem : mixed interchanger}). 
   This verifies 
   that 
   $(P,\pi_P) \odot f \sim_{\pi_P \odot \pi_X \mathrm{-a.s}} (P,\pi_P) \odot g$.
  One checks without difficulty that 
  $(P,\pi_P) \odot \_$ respects composition of morphisms, identities
  and that
  \[(P,\pi_P) \odot \_ \colon \mathrm{PS}(\mathcal{C}) \to \mathrm{PS}(\mathcal{C}). \]
   is indeed a well defined functor. 
  Furthermore,
  using the notation from Remark \ref{rem : PS is SMC},
   $(J,\iota) \odot \_$ coincides with the identity 
   on $\mathrm{PS}(\mathcal{C})$ and 
   we have a natural isomorphism 
   $\mathrm{id}_{\mathrm{PS}(\mathcal{C})} \xrightarrow{\sim}  (J,\iota) \odot \_$
   which is induced by the 
   isomorphism 
   $\mathrm{id}_{\mathcal{C}} \xrightarrow{\sim}  J \odot \_.$
   
   Lastly,
   let $(P,\pi_P), (Q,\pi_Q) \in \mathrm{PS}(\mathcal{M})$
   and $(X,\pi_X) \in \mathrm{PS}(\mathcal{C})$.
   Since
   we have a strong monoidal functor 
   $P \odot ( Q \odot \_ ) \xrightarrow{\sim} (P \star Q) \odot \_$, 
   we deduce that the following diagram is commutative. 
$$
\begin{tikzcd} [row sep = large, column sep = 12ex] 
I \arrow[r,"\sim"] \arrow[d,equal] & J \odot I \arrow[r] \arrow[d,equal] & J \odot (J \odot I) \isoarrow{d} \arrow[r, "\pi_P \odot (\pi_Q \odot \pi_X)"] & P \odot (Q \odot X ) \isoarrow{d} \\
I \arrow[r,"\sim"]  & J \odot I \arrow[r] & ( J \star J ) \odot I \arrow[r,"(\pi_P \star \pi_Q) \odot \pi_X"] & 
( P \star Q) \odot X.  
\end{tikzcd}
$$
We deduce from this and similar such arguments that 
we have an isomorphism 
\[ (P,\pi_P) \odot ((Q,\pi_Q) \odot \_ ) \xrightarrow{\sim} ((P,\pi_P) \star (Q,\pi_Q)) \odot \_ \]
making the 
functor
\[\mathrm{PS}(\mathcal{M}) \to \mathrm{End}(\mathrm{PS}(\mathcal{C}))\]
given by 
\[ (P,\pi_P) \mapsto (P,\pi_P) \odot \_ \] 
a strong monoidal functor. 
This concludes the proof. 
       \end{proof}

\begin{lem} \label{lem : PS(M) actegories II}
Let $(\mathcal{A},I,\otimes)$ and $(\mathcal{M},J,\star)$ be symmetric monoidal 
categories such that $\mathcal{A}$ is an $\mathcal{M}$-actegory. 
Let $$S \colon \mathcal{A}^{\mathrm{op}} \to \mathcal{M}-\mathrm{Mod}$$ be a functor which takes values in the category of $\mathcal{M}$-actegories.
We suppose that $S$ satisfies the following property. 
\begin{enumerate}
\item There exists an $\mathcal{M}$-actegory $\mathcal{B}$ such that for every 
$X \in \mathrm{Ob}(\mathcal{A})$, 
$S(X) = \mathcal{B}$.
\item For every $M \in \mathrm{Ob}(\mathcal{M})$, 
$X,Y \in \mathrm{Ob}(\mathcal{A})$, $f \in \mathcal{A}(X,Y)$,
$A \in \mathrm{Ob}(S(X))$ and $B \in \mathrm{Ob}(S(Y))$, 
we have that 
\[ M \odot S(f)(B) = S(M \odot f) (M \odot B) \]
\end{enumerate}
The Grothendieck lens $\mathrm{Lens}_S$ is then an $\mathcal{M}$-actegory. 
     \end{lem}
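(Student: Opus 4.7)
The plan is to transport the $\mathcal{M}$-actegory structures on $\mathcal{A}$ and $\mathcal{B}$ directly onto $\mathrm{Lens}_S$, using hypothesis (2) as the key glue that ensures the backward components of lenses behave well under the action. First I would recall the Grothendieck lens construction: an object of $\mathrm{Lens}_S$ is a pair $(X,A)$ with $X\in\mathcal{A}$ and $A\in S(X)=\mathcal{B}$, and a morphism $(X,A)\to(Y,B)$ is a pair $(f,f^{\#})$ consisting of $f\colon X\to Y$ in $\mathcal{A}$ and $f^{\#}\colon A\to S(f)(B)$ in $S(X)$. Fixing $M\in\mathcal{M}$, I would define an endofunctor $M\odot(-)\colon\mathrm{Lens}_S\to\mathrm{Lens}_S$ on objects by $(X,A)\mapsto(M\odot X,\,M\odot A)$, where $M\odot X$ uses the action on $\mathcal{A}$ and $M\odot A$ uses the action on $\mathcal{B}$; note that $S(M\odot X)=\mathcal{B}$ by hypothesis (1), so this is a well-formed object.

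On morphisms, for $(f,f^{\#})\colon(X,A)\to(Y,B)$ I would set the forward component to be $M\odot f\colon M\odot X\to M\odot Y$ in $\mathcal{A}$ and the backward component to be $M\odot f^{\#}\colon M\odot A\to M\odot S(f)(B)$ in $\mathcal{B}$. This is legitimate precisely because hypothesis (2) identifies $M\odot S(f)(B)$ with $S(M\odot f)(M\odot B)$, so $M\odot f^{\#}$ really is a morphism of the form required by the lens structure, namely landing in $S(M\odot f)(M\odot B)$. Functoriality of $M\odot(-)$ then reduces to functoriality of the two actions on $\mathcal{A}$ and $\mathcal{B}$ together with the compatibility of the identification in (2) with composition; the latter is naturality of (2) which, while not written, is implicitly packaged into the statement that $S$ takes values in $\mathcal{M}$-actegories.

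Next I would assemble these endofunctors into a strong monoidal functor $\Phi_{\mathrm{Lens}_S}\colon\mathcal{M}\to\mathrm{End}(\mathrm{Lens}_S)$. On a morphism $r\colon M\to N$ in $\mathcal{M}$, the natural transformation $M\odot(-)\Rightarrow N\odot(-)$ is given componentwise by the pair of natural transformations provided by the $\mathcal{M}$-actegory structures on $\mathcal{A}$ and $\mathcal{B}$, which are automatically compatible on the backward slot for the same reason as above. The strong monoidality isomorphisms $J\odot(-)\cong\mathrm{id}$ and $(M\star N)\odot(-)\cong M\odot(N\odot(-))$ are constructed slotwise from the corresponding isomorphisms on $\mathcal{A}$ and $\mathcal{B}$, and the coherence pentagon and triangle reduce to those already holding in $\mathcal{A}$ and $\mathcal{B}$.

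The main obstacle is the bookkeeping around hypothesis (2): one has to verify that the equality $M\odot S(f)(B)=S(M\odot f)(M\odot B)$ is natural in $f$ and $B$ and compatible with the coherence isomorphisms of both actegories, so that the backward components of identities, composites and $2$-cells in $\mathrm{Lens}_S$ all transport correctly. Everything else is slotwise inheritance from the two given actegory structures, so once this compatibility is unpacked the verification of the actegory axioms for $\mathrm{Lens}_S$ is largely mechanical.
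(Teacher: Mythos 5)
Your proof is correct and follows essentially the same route as the paper's: act slotwise by $(X,A)\mapsto(M\odot X,\,M\odot A)$, use hypothesis (2) to re-type the backward component as a morphism involving $S(M\odot f)(M\odot B)$, and inherit the strong monoidal structure $J\odot(-)\cong\mathrm{id}$ and $(M\star N)\odot(-)\cong M\odot(N\odot(-))$ slotwise from the actions on $\mathcal{A}$ and $\mathcal{B}$. The only discrepancy is the direction of the backward component of a lens morphism --- the paper takes $f^{*}\colon S(f)(B)\to A$ rather than $A\to S(f)(B)$ --- but this convention difference does not affect the argument.
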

Note that in the statement of the above lemma, condition 
(2) only makes sense if we have condition (1). 
\begin{proof}
 Recall that the objects of $\mathrm{Lens}_S$ are tuples of the form 
 $(X, A)$ where $X$ is an object in $\mathcal{A}$ and 
 $A \in \mathrm{Ob}(S(X))$. Given $M \in \mathrm{Ob}(\mathcal{M})$, 
 we define 
  \[ M \odot (X,A) := (M \odot X, M \odot A).\]
 We show that 
 $M \odot \_$ defines an endofunctor 
on $\mathrm{Lens}_S$. 
Let $\mathbf{f} \colon (X,A) \to (Y,B)$ be given 
by a pair of morphisms 
$f \colon X \to Y$ and a map $f^* \colon S(f)(B) \to A$. 
We define 
\[M \odot \mathbf{f} \colon M \odot (X,A) \to M \odot (Y,B)\]
in $\mathrm{Lens}_S$
as follows. 
Since $\mathcal{M}$ is a $\mathcal{M}$-actegory, we have a morphism 
\[M \odot {f} \colon M \odot X \to M \odot Y\]
and a morphism 
\[M \odot {f^*} \colon M \odot S(f)(B) \to M \odot A.\]
We apply the assumption on $S$ to get
\[M \odot {f^*} \colon S(M \odot f)(M \odot B) \to M \odot A.\]
The pair 
$(M \odot {f}, M \odot {f^*})$ defines a morphism 
$M \odot (X,A) \to M \odot (Y,B)$ in $\mathrm{Lens}_S$. 
Clearly, $M \odot \_$ preserves identities. One checks without difficulty 
that $M \odot \_$ respects compositions as well. We have thus shown 
that $M \odot \_$ defines an endofunctor on $\mathrm{Lens}_S$. 

It remains to show that the morphism
$\mathcal{M} \to \mathrm{End}(\mathrm{Lens}_S)$ is a strong 
monoidal functor. Firstly, 
using that $\mathcal{A}$ is a $\mathcal{M}$-actegory 
and for every $X \in \mathrm{Ob}(\mathcal{A})$,  
$S(X)$ is a $\mathcal{M}$-actegory, we deduce that 
if $J$ is the unit object of $\mathcal{M}$ then 
$J \odot \_$ is naturally isomorphic to the identity 
endofunctor. 
In a similar fashion, 
given $N,M \in \mathrm{Ob}(\mathcal{M})$ and 
$(X,A)$ in $\mathrm{Lens}_S$
we verify 
that 
we have a natural isomorphism of 
endofunctors 
\[N \odot (M \odot (X,A)) \xrightarrow{\sim} (N \star M) \odot (X,A).\]
This concludes the proof. 
\end{proof} 
   
\subsection{Bayes Learn}

  Let $\mathcal{C}$ be a Markov category which admits 
  conditionals. Note that this is equivalent to saying that $\mathcal{C}$ admits 
  Bayesian inversions. 
  Since Bayesian inversions in general are defined 
  up to an equivalence relation, we restrict our attention to the 
  category $\mathrm{PS}(\mathcal{C})$ (cf.\S \ref{section : PS}). 
  In fact, in this case, Bayesian inversion defined a symmetric monoidal dagger functor 
  on $\mathrm{PS}(\mathcal{C})$ in the sense of dagger categories. 
  We refer the reader to \cite[Remark 13.10]{TFSyntheticApproach}.
  
Recall from \cite{CGgradientlearning}, the 
basis of the gradient learning 
functor $\mathrm{GL}$ comes from a functor 
\[R \colon \mathcal{C} \to \mathrm{Lens}(\mathcal{C})\]
where in this case $\mathcal{C}$ is a
Cartesian reverse differential category.
In our situation, we can mirror this construction
via the mechanism of Bayesian inversion and the notion 
of generalized lenses. 
We proceed below in greater detail.

 As above, our goal is to define 
  a functor
  \[ R \colon \mathrm{PS}(\mathcal{C}) \to \mathrm{Lens}_F \]
  where $\mathrm{Lens}_F$ is the $F$-lens associated 
  to a functor
  \[\mathrm{PS}(\mathcal{C})^{\mathrm{op}} \to \mathrm{Cat} \]
  (cf. \cite{S2020generalized}). 

\subsubsection{The functor $\mathrm{Stat}$}
  We define the functor 
 \[\mathrm{Stat} \colon \mathrm{PS}(\mathcal{C})^{\mathrm{op}} \to \mathrm{Cat}\]
 as follows.
  Given $X \in \mathrm{Ob}(\mathrm{PS}(\mathcal{C}))$,
   let
   \[\mathrm{Stat}(X) := \mathrm{PS}(\mathcal{C}).\] 
   Given a map $f \colon X \to Y$ in $\mathrm{PS}(\mathcal{C})$,
   the natural transformation 
   $F(Y) \to F(X)$ is the identity functor.

   \begin{rem} \label{rem : morphisms simplify}
   \emph{Observe that our definition of $\mathrm{Stat}$ coincides with the definition 
   of $\mathrm{Stat}$ from 
   \cite{Sm2020bayesian}. Indeed, 
   since $\mathcal{C}$ is Markov, the unit $I$ is a terminal object and 
 hence there is a unique state $\iota \colon I \to I$. We simplify notation as above 
 and write $I$ in place of $(I,\iota)$. 
 $\mathrm{PS}(\mathcal{C})(I,(X,\pi_X))$ hence consists of a single element 
  corresponding to the state $\pi_X$. 
If $X \in \mathrm{PS}(\mathcal{C})$ 
   and $(A,\pi_A),(B,\pi_B) \in \mathrm{Ob}(\mathrm{Stat}(X))$ then 
   a morphism $f \colon (A,\pi_A) \to (B,\pi_B)$ in $\mathrm{Stat}(X)$
  coincides with a morphism of sets 
  \[ \mathrm{PS}(\mathcal{C})(I,(X,\pi_X)) \to \mathrm{PS}(\mathcal{C})((A,\pi_A),(B,\pi_B)) \]
   since $\mathrm{PS}(\mathcal{C})(I,(X,\pi_X))$ is the singleton set.
   This is precisely how 
   $\mathrm{Stat}$ is defined in \cite{Sm2020bayesian}.}
      \end{rem}
  
  \subsubsection{The Grothendieck Lens}
  Let $\mathrm{Lens}_\mathrm{Stat}$ be the lens associated to the functor 
  $\mathrm{Stat}$ as introduced in \cite[\S 3.1]{S2020generalized}.
   More precisely, we have that 
   \begin{itemize}
   \item The objects of the category $\mathrm{Lens}_{\mathrm{Stat}}$ are 
   pairs $((X,\pi_X),(A,\pi_A))$ where 
   $(X,\pi_X) \in \mathrm{PS}(\mathcal{C})$ and 
   $(A,\pi_A) \in \mathrm{Stat}(X)$. 
   \item A morphism $\phi \colon ((X,\pi_X),(A,\pi_A)) \to ((Y,\pi_Y),(B,\pi_B))$ 
   is given by a morphism $(X,\pi_X) \to (Y,\pi_Y)$ in $\mathrm{PS}(\mathcal{C})$
   and a morphism $(B,\pi_B) \to (A,\pi_A)$ in \\ $\mathrm{Stat}(X) = \mathrm{PS}(\mathcal{C})$.    
 \end{itemize}    
   One checks that this is a well defined category and 
    is an
   instance of the \emph{Grothendieck construction} as in \cite[\S 3.1]{S2020generalized}.
   \begin{rem} \label{rem : Lens_Stat is simple}
   \emph{The category $\mathrm{Lens}_{\mathrm{Stat}}$ simplifies 
   considerably in our situation.}
   \[ \mathrm{Lens}_{\mathrm{Stat}} \simeq 
   \mathrm{PS}(\mathcal{C}) \times \mathrm{PS}(\mathcal{C})^{\mathrm{op}} \]
   \end{rem}

 \subsubsection{ The functor \emph{R}}
  We define the functor 
  \[R \colon \mathrm{PS}(\mathcal{C}) \to \mathrm{Lens}_{\mathrm{Stat}}\]
  as follows. 
  \begin{itemize}
  \item  Given $(X,\pi_X) \in \mathrm{PS}(\mathcal{C})$, we set 
  $R((X,\pi_X)) := ((X,\pi_X),(X,\pi_X))$.
  \item If $f \colon (X,\pi_X) \to (Y,\pi_Y)$ is a morphism 
  in $\mathrm{PS}(\mathcal{C})$ then 
  the map 
  \[R(f) \colon ((X,\pi_X),(X,\pi_X)) \to ((Y,\pi_Y),(Y,\pi_Y)) \]
  in $\mathrm{Lens}_{\mathrm{Stat}}$
   is defined 
   to be the pair 
   $(f,f_{\pi_X}^{\dag})$ where 
   $f^{\dag}_{\pi_X}$ is the Bayesian inversion 
   of $f$ with respect to the state $\pi_X$ on $X$.
    Note that this simplification is a consequence of our discussion above.
    \end{itemize} 
%
 
 \begin{prop}
  The functor $R$ is well defined. 
 \end{prop}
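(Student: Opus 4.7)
The plan is to leverage the simplification of the target category recorded in Remark \ref{rem : Lens_Stat is simple}, namely the equivalence $\mathrm{Lens}_{\mathrm{Stat}} \simeq \mathrm{PS}(\mathcal{C}) \times \mathrm{PS}(\mathcal{C})^{\mathrm{op}}$. Under this equivalence, the prescription of $R$ becomes the pairing of the identity functor on $\mathrm{PS}(\mathcal{C})$ with the Bayesian inversion assignment in the contravariant slot, so well-definedness of $R$ reduces to well-definedness and functoriality of Bayesian inversion as an endofunctor $(-)^{\dag} \colon \mathrm{PS}(\mathcal{C})^{\mathrm{op}} \to \mathrm{PS}(\mathcal{C})$.

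First I would verify that $R$ sends morphisms to morphisms. The forward leg is unchanged, so the only nontrivial point is the backward leg. For $f \colon (X,\pi_X) \to (Y,\pi_Y)$ in $\mathrm{PS}(\mathcal{C})$, pick any representative $\tilde f \in \mathcal{C}(X,Y)$; any Bayesian inversion $\tilde f^{\dag}_{\pi_X} \colon Y \to X$ satisfies $\pi_Y \,;\, \tilde f^{\dag}_{\pi_X} = \pi_X$ up to $\pi_Y$-a.s. equality, as a direct consequence of marginalising the defining string diagram equation for the Bayesian inversion (equivalently, by reading off the marginal from the swapped jointification). Hence $\tilde f^{\dag}_{\pi_X}$ is a morphism $(Y,\pi_Y) \to (X,\pi_X)$ in $\mathrm{PS}(\mathcal{C})$. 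Independence of the choice of representative follows because any two Bayesian inversions of $\tilde f$ w.r.t.\ $\pi_X$ are $\pi_Y$-a.s.\ equal (noted in the remark following the definition of Bayesian inversion), and if $\tilde f \sim_{\pi_X\text{-a.s.}} \tilde g$ then their inversions agree $\pi_Y$-a.s.\ by the uniqueness-up-to-a.s.-equality of disintegrations of the common swapped jointification. Thus $R(f) := (f, f^{\dag}_{\pi_X})$ is a well-defined morphism in $\mathrm{Lens}_{\mathrm{Stat}}$.

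Next I would verify functoriality. Identities: $\mathrm{id}_X$ satisfies the defining diagram of its own Bayesian inversion with respect to any state, so $(\mathrm{id}_X)^{\dag}_{\pi_X} = \mathrm{id}_X$ modulo $\pi_X$-a.s.\ equality. Composition: given $f \colon (X,\pi_X) \to (Y,\pi_Y)$ and $g \colon (Y,\pi_Y) \to (Z,\pi_Z)$, the lens composition in $\mathrm{Lens}_{\mathrm{Stat}}$ produces, in the backward leg, the composite $g^{\dag}_{\pi_Y} \,;\, f^{\dag}_{\pi_X}$. One checks directly from the defining string-diagram equation that this composite is a Bayesian inversion of $f \,;\, g$ with respect to $\pi_X$; uniqueness up to $\pi_Z$-a.s.\ equality then gives $(f \,;\, g)^{\dag}_{\pi_X} = g^{\dag}_{\pi_Y} \,;\, f^{\dag}_{\pi_X}$ in $\mathrm{PS}(\mathcal{C})$. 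This is precisely the content of the statement, already recorded in the text, that Bayesian inversion yields a dagger functor on $\mathrm{PS}(\mathcal{C})$ (see \cite[Remark 13.10]{TFSyntheticApproach}).

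The main obstacle, should one wish to argue from scratch rather than citing the dagger-functor fact, is the compatibility of Bayesian inversion with composition at the level of $\mathrm{PS}(\mathcal{C})$; this involves a diagrammatic manipulation of the jointification of $f\,;\,g$ and careful use of the fact that distinct choices of representative disintegrations produce $\pi$-a.s.\ equal morphisms. Since that compatibility is established in the cited reference, the proof here is essentially a bookkeeping check translating Bayesian inversion into the contravariant slot of the product $\mathrm{PS}(\mathcal{C}) \times \mathrm{PS}(\mathcal{C})^{\mathrm{op}}$.
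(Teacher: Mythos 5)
Your proposal is correct and follows essentially the same route as the paper: the paper's (very terse) proof likewise reduces everything to the fact that Bayesian inversions are unique in $\mathrm{PS}(\mathcal{C})$ and compose well, i.e.\ the dagger-functor property from \cite[Remark 13.10]{TFSyntheticApproach} that the text invokes just before the proposition. Your write-up merely fills in the details (morphisms land where they should, independence of representative, identities and composition) that the paper leaves implicit.
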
 
  \begin{proof}
   We must essentially verify that $R$ behaves well with regards to composition. 
  This is a consequence of the fact that in the category $\mathrm{PS}(\mathcal{C})$,
  Bayesian inversions are unique and 
  their composition is indeed well defined.   
  \end{proof} 
  
  \subsubsection{The functor \emph{BayesLearn}} \label{section : BayesLearn}
   
   Let $\mathcal{M}$ and $\mathcal{C}$ be Markov categories with $\mathcal{M}$ causal. 
  To define {BayesLearn}, we now specialize to the 
  case where $\mathcal{C}$ is a symmetric monoidal $\mathcal{M}$-actegory which in addition 
  is in agreement with $\mathcal{M}$. 
 
 Since $\mathcal{M}$ is causal, $\mathrm{PS}(\mathcal{M})$ 
 is a well defined symmetric monoidal category.
  By Lemmas \ref{lem : PS(M) actegories} 
  and
  \ref{lem : PS(M) actegories II}, the 
  categories 
  $\mathrm{PS}(\mathcal{C})$ and 
  $\mathrm{Lens}_{\mathrm{Stat}}$ are 
  $\mathrm{PS}(\mathcal{M})$-actegories. 
  Recall from
   Section \ref{section : the para construction} that
    $\mathbf{Para}_{\mathrm{PS}(\mathcal{M})}(\_)$ is a well defined functor 
    which when applied to $R$ gives a functor 
  \[ \mathbf{Para}_{\mathrm{PS}(\mathcal{M})}(R) 
  \colon \mathbf{Para}_{\mathrm{PS}(\mathcal{M})}(\mathrm{PS}(\mathcal{C})) \to \mathbf{Para}_{\mathrm{PS}(\mathcal{M})}(\mathrm{Lens}_{\mathrm{Stat}}) \]  
  
  Recall that 
  if $(\mathcal{P},J,\star)$ is a symmetric monoidal category
  then
  a $\mathcal{P}$-actegory $\mathcal{A}$
  admits a canonical functor 
  $j_{\mathcal{P},\mathcal{A}} \colon \mathcal{A} \to \mathbf{Para}_{\mathcal{P}}(\mathcal{A})$
  given by $A \mapsto J \odot A$. 
  Note that  $j_{\mathcal{P},\mathcal{A}}$
  is the unit for the pseudo-monad 
  defined by $\mathbf{Para}_{\mathcal{P}}(\cdot)$. 
  We thus have a diagram
$$
\begin{tikzcd} [row sep = large, column sep = 12ex] 
\mathcal{C} \arrow[r] & \mathrm{PS}(\mathcal{C}) \arrow[r,"R"] \arrow[d,"j_{\mathrm{PS}(\mathcal{M}),\mathcal{C}}"] &
\mathrm{Lens}_{\mathrm{Stat}} \arrow[d,"j_{\mathrm{PS}(\mathcal{M}),\mathrm{Lens}_{\mathrm{Stat}}}"] \\
& \mathbf{Para}(\mathrm{PS}(\mathcal{C})) \arrow[r,"\mathbf{Para}_{\mathrm{PS}(\mathcal{M})}(R)"] & \mathbf{Para}(\mathrm{Lens}_{\mathrm{Stat}})
\end{tikzcd}
$$ 
 
  \begin{defi} 
  \emph{We define}
   \[\mathrm{BayesLearn} := \mathbf{Para}_{\mathrm{PS}(\mathcal{M})}(R).\]
   \end{defi}
 
  \begin{rem}
 \emph{Observe that the BayesLearn functor in 
  \S \ref{section : BayesLearn}
   does not have an update or displacement 
  endofunctor as the Gradient learning functor 
  from \cite[\S 3.5]{CGgradientlearning}. 
  This is due to the relatively simplified nature of Bayesian learning where
  parameter updates correspond to obtaining the posterior distribution
  using the prior and the likelihood and not
   as a result of optimizing with 
  respect to a loss function. 
  Equivalently, in the categorical setting, Bayesian inversion 
  provides an update rule without recourse to a displacement or error endofunctor.}
  \end{rem}
  
 \begin{rem} \label{rem : bayesian learning easiest}
  \emph{Note that when we contrast the Bayesian learning framework 
  described above with the gradient learning framework 
  described in \cite{CGgradientlearning}, we 
  see that Bayesian learning is considerably simpler.
  By using the Grothendieck lens in place of the standard Lens construction
  and $\mathrm{ProbStoch}(\mathcal{C})$ in place 
  of $\mathcal{C}$, the existence of Bayesian inversion 
  as a dagger functor 
  implies the breakdown of the functor Stat resulting 
  in the simplified 
  form of $\mathrm{Lens}_{\mathrm{Stat}}$ 
  as described in 
  Remark \ref{rem : Lens_Stat is simple}. In addition, the absence 
  of error and update endofunctors further distinguishes 
  the Bayes Learning framework. In this sense, we believe Bayesian learning 
  to be the simplest case of the categorical learning as described 
  in \cite{CGgradientlearning}.
  }
 \end{rem} 
  
   \begin{rem}
 \emph{Lemma \ref{lem : PS(M) actegories II} 
 provides $\mathrm{PS}(\mathcal{C})$ with the structure 
 of a $\mathrm{PS}(\mathcal{M})$-actegory. 
 However, by defining Stat for the 
 category $\mathcal{M}$ which we denote $\mathrm{Stat}_\mathcal{M}$, we also get that}
 \[ \mathrm{Lens}_{\mathrm{Stat}_\mathcal{M}} 
     \simeq \mathrm{PS}(\mathcal{M}) \times \mathrm{PS}(\mathcal{M})^{\mathrm{op}}. \]
\emph{Hence
 $\mathrm{Lens}_{\mathrm{Stat}_\mathcal{\mathcal{C}}}$ 
 in fact 
 has the 
 structure of $\mathrm{Lens}_{\mathrm{Stat}_\mathcal{M}}$-actegory.
 Since we are interested only in the BayesLearn functor and in particular its image, we
 do not concern ourselves with this more general action of 
 $\mathrm{Lens}_{\mathrm{Stat}_\mathcal{M}}$.}
 
 \emph{Note also that we can endow $\mathrm{PS}(\mathcal{C})$
 with the structure of
 $\mathrm{Lens}_{\mathrm{Stat}_{\mathrm{PS}(\mathcal{M})}}$-actegory 
 via the projection 
 \[\mathrm{Lens}_{\mathrm{Stat}_{\mathcal{M}}}
 \to \mathrm{PS}(\mathcal{M}).\] This 
 implies that 
 we can also view $\mathrm{Lens}_{\mathrm{Stat}_{\mathcal{C}}}$
 as a
 $\mathrm{Lens}_{\mathrm{Stat}_{\mathcal{M}}}$-actegory in line with the theory from 
 \cite{CGHR2021foundations}.}
 \end{rem}

%
%

\subsection{Bayes Learning algorithm} \label{sec : bayes learning algo}

We now detail the Bayes Learning algorithm in our current setup. 
We preserve our assumptions on $\mathcal{C}$ and 
$\mathcal{M}$ from the previous section.
We are given training data which consists 
of objects $X_T$ and $Y_T$
in $\mathcal{C}$ and a joint distribution $\omega_T \colon I \to X_T \otimes Y_T$.  
We would like to perform inference 
for objects $X_*$ and $Y_*$ in $\mathcal{C}$.
We are also provided with states 
$\pi_{X_*} \colon I \to X_*$
and $\pi_{X_T} \colon I \to X_T$ where $\pi_{X_T}$ is
obtained by marginalizing $\omega_T$. 
Note that in practice 
$X_T = X_*$ and $Y_T = Y_*$
We proceed as follows. 

\begin{enumerate} 
\item We model the given data 
by choosing a function 
$f \colon X_T \to Y_T$ in 
$\mathbf{Para}_{\mathcal{M}}(\mathcal{C})$.
This corresponds to a 
morphism 
$f \colon M \odot X_T \to Y_T$. We assume that 
model satisfies a technical assumption which we precise in (2).
We assume we have a similar model applicable
for the inference data i.e. $f_* \colon M \odot X_* \to Y_*$. 
\item The morphism 
$f$ descends to give a morphism
in the category $\mathrm{PS}(\mathcal{C})$ as follows. 
Firstly, we endow $M$ with a prior distribution i.e.
a state $\pi_M \colon J \to M$.
The morphism $\pi_M \odot \pi_{X_T} \colon I \simeq J \odot I \to M \odot X_T$ defines 
a state on $M \odot X_T$.
By composing with $f$ we obtain a state on $Y_T$ 
i.e. a morphism $\pi_{Y_T} := (\pi_M \odot \pi_{X_T}) ; f \colon I \to Y_T$.
We assume that the model $f$ and the prior $\pi_M$ were 
chosen so as to guarantee that $\pi_{Y_T}$ coincides with the marginal distribution
on $Y_T$ from $\omega_T$.
The equivalence class of $f$ defines a morphism 
$(M,\pi_M) \odot (X_T,\pi_{X_T}) \to (Y_T,\pi_{Y_T})$.
Equivalently, we have a map in 
$\mathbf{Para}_{\mathrm{PS}(\mathcal{M})}(\mathrm{PS}(\mathcal{C}))$.
\item By construction, 
\[\mathrm{BayesLearn}(f) := (f,f^\dag) \]
is a morphism 
in $\mathbf{Para}_{\mathrm{PS}(\mathcal{M})}(\mathrm{Lens}_{\mathrm{Stat}})$ between 
objects $((X_T,\pi_{X_T}),(X_T,\pi_{X_T}))$ and $((Y_T,\pi_{Y_T}),(Y_T,\pi_{Y_T}))$. 
Let \[f^\dag \colon (Y_T,\pi_{Y_T}) \to (M,\pi_{M}) \odot (X_T,\pi_{X_T})\] 
denote the corresponding inversion. 
\item Recall that in \S \ref{section : bayesian inference prelims}, 
  we outlined how to leverage the posterior distribution to 
  make predictions. 
  This can be formalized in 
  a categorical setting as follows. Consider the composition
\begin{align*} \label{equn : predictive density}
 (Y_T,\pi_{Y_T}) \otimes (X_*,\pi_{X_*}) & \xrightarrow{f^\dag \otimes \mathrm{id}}
    ((M,\pi_M) \odot (X_T,\pi_{X_T})) \otimes (X_*,\pi_{X_*}) 
    \\ & \simeq (X_T,\pi_{X_T}) \otimes ( (M,\pi_M) \odot (X_*,\pi_{X_*}))
\end{align*}  
The isomorphism above is obtained by composing the isomorphisms 
\begin{align*}
((M,\pi_M) & \odot (X_T,\pi_{X_T})) \otimes (X_*,\pi_{X_*})
\\ & \overset{(i)} \simeq (((M,\pi_M) \odot I) \otimes (X_T,\pi_{X_T})) \otimes (X_*,\pi_{X_*})
\\ & \overset{(ii)} \simeq ((X_T,\pi_{X_T}) \otimes ((M,\pi_M) \odot I)) \otimes (X_*,\pi_{X_*})  
\\ & \overset{(iii)} \simeq (X_T,\pi_{X_T}) \otimes (((M,\pi_M) \odot I) \otimes (X_*,\pi_{X_*}))
\\ & \overset{(iv)} \simeq (X_T,\pi_{X_T}) \otimes ((M,\pi_M) \odot (X_*,\pi_{X_*}))
\end{align*}
where 
(i) is due to the mixed associator, (ii) is because of the swap
isomorphism, (iii) is a consequence of the associative property 
of the monoidal product and (iv) is obtained by 
applying the mixed associator again. 

   We conditionalize the morphsim to get a morphism 
\[(X_T,\pi_{X_T}) \otimes  (Y_T,\pi_{Y_T}) \otimes (X_*,\pi_{X_*})
 \to (M,\pi_M) \odot (X_*,\pi_{X_*}) \]
 By composing on the right by $f_*$, we get 
 \[(X_T,\pi_{X_T}) \otimes  (Y_T,\pi_{Y_T}) \otimes (X_*,\pi_{X_*}) \to (Y_*,\pi_{Y_*}).\]

  This is the Bayes predictive distribution. 
  By pre-composing with the state 
  $\pi_{X_T} \otimes \pi_{Y_T}$, we 
  get a map $(X_*,\pi_{X_*}) \to (Y_*,\pi_{Y_*})$. We call this the full predictive distribution
  obtained by averaging out the predictive distributions as they vary over 
  different instances of the training data. 
\end{enumerate}

\begin{es}
\emph{Let us work within the category BorelStoch
which is the subcategory of Stoch from
 Example \ref{es : markov category examples}. 
 We make this restriction because BorelStoch admits conditionals.
 In BorelStoch, we can view a morphism $f \colon A \to B$ as 
 defining a conditional distribution $p(b|a)$.}
 
  \emph{Let $X$ and $Y$ be Borel spaces. 
 Our training data consists of a list 
 $\mathbf{T} := [(x_1,y_1),\ldots,(x_n,y_n)]$
 of points in
 $\mathrm{List}(X \times Y)$. 
 To align with our notation 
 from \S \ref{sec : bayes learning algo}, 
 we write $X_T$ and $Y_T$ to be copies of $X$ and $Y$
 and endow $X_T \times Y_T$ with the empirical distribution 
 obtained from $\mathbf{T}$. 
 Our goal is to obtain an estimate of the conditional 
 probability $p(y_*|x_*)$ for general points $x_* \in X_*$
 and $y_* \in Y_*$.
 As for the training set, let $X_*$ and $Y_*$ be copies of $X$ and 
 $Y$ respectively which we use for inference.  
 }

 \emph{As outlined in \S \ref{sec : bayes learning algo} and 
 \S \ref{section : bayesian inference prelims}, we
 model the given data via a parametrized function of the form  
 $f \colon P \times X_T \to Y_T$ where $P \in \mathrm{BorelStoch}$. 
 We endow $P$ with a prior distribution i.e. a state 
 $\pi_P \colon I \to P$ where $I = \{*\}$ is the unit object.
 We must update the prior $p$ to get the posterior distribution. 
 This is accomplished via the Bayesian inversion 
 $f^\dag \colon Y_T \to P \times X_T$. 
 Note that $f^\dag$ is not unique. 
 By conditionalizing, we get a map 
 $X_T \times Y_T \to P$ in BorelStoch which defines 
 the posterior.}
 
 \emph{In this case, the Bayes predictive density 
  as described in \S \ref{sec : bayes learning algo}
  is given by}
  \[ p(y_* | x_*, T) = \int_P p(y_*|P,x_*,T)p(P|x_*,T).\]
  \emph{This is a consequence of how compositions 
  are defined in Stoch i.e. via the Chapman-Kolmogorov equation 
  cf. \S \ref{es : markov category examples}.}
      \end{es}

\section{Bayes updates} \label{section : bayes updates}
  
      While Section \ref{sec : bayes learning algo} describes the Bayes Learning algorithm, we 
    observe that it does not provide a mechanism by which we can 
    \emph{update} the prior on the parameter space. 
    Recall, via Bayesian inversion we obtain a
    channel 
    $(X_T,\pi_{X_T}) \otimes (Y_T,\pi_{Y_T}) \to (M,\pi_M)$.
    However, in practice, when working in a suitable
    sub-category of Stoch or in FinSet,
    we are given 
    a training set 
    \[ \mathbf{T} := \{(x_1,y_1), \ldots, (x_n,y_n)\}\]
    which we use to obtain the posterior
    distribution on $M$.
    Our goal in this section is to 
    translate this into the categorical framework we have developed so far. 
    
    In the case of Stoch, we can represent a data point 
    $(x,y) \in X_T \times Y_T$ as the product of a pair of morphisms
    $I \to X_T$ and $I \to Y_T$
    mapping $*$ to the 
    probability measures that 
    concentrate at the points $x$ and $y$ respectively. 
    We use $\delta_x$,$\delta_y$ respectively to denote these maps.
    Given a channel $c \colon X \times Y \to M$ in 
    Stoch, we define a state on $M$ via 
    the composition 
    \[I \to I \times I \xrightarrow{\delta_{x_1} \times \delta_{y_1}} X \times Y 
    \xrightarrow{c} M.\] 
      This effectively defines 
      the posterior on $M$ given the training 
      set $\mathbf{T}_1$ where 
      $\mathbf{T}_1 := \{(x_1,y_1)\}$.
      The natural question to ask in this setting is 
      how to sequentially update the posterior 
      to achieve 
     the required update over the entire training dataset 
     and if one can also achieve such an update all at once. 
    We outline a possible solution in what follows. 
    
    For the remainder of this section we work with a Markov category 
    $(\mathcal{C},I,\otimes)$ such that $\mathcal{C} = \mathrm{Kl}(\mathcal{P})$ where
    $\mathcal{P} \colon \mathcal{D} \to \mathcal{D}$ is a symmetric monoidal 
    monad on the symmetric monoidal category $(\mathcal{D},I_{\mathcal{D}},\ast)$. 
    Furthermore, we suppose that $\mathcal{C}$ admits conditionals. 
    As before, let $(\mathcal{M},J,\star)$ be a symmetric monoidal category 
    such that $\mathcal{C}$ is a symmetric monoidal $\mathcal{M}$-actegory. 
    
    We are given a model i.e. a morphism 
    $f \colon M \odot X \to Y$ in 
    $\mathcal{C}$
     where we think of $M \in \mathrm{Ob}(\mathcal{M})$ as the parameter space
      and $f$ models a true morphism $X \to Y$.
     We suppose as before that we are provided with a state
     $\pi_X \colon I \to X$ and 
     a prior $\pi_{M,0} \colon J \to M$.   
     Lastly, we abuse notation and write $M$ in place of $M \odot I$
     when necessary. Note a prior $\pi_{M} \colon J \to M$ 
     implies a prior $\pi_{M} \odot \mathrm{id} \colon J \odot I \to M \odot I$
     which we also refer to as $\pi_{M}$.

 \subsection{Sequential updates} \label{section : sequential updates}  
    Recall from 
    Definition \ref{def : disintegration and jointification} that
    the state $\pi_X$ and the channel $f$ give us
     a morphism 
     \begin{equation} \label{equn : f_joint}
     f_{\mathrm{joint}} \colon M \odot I \to X \otimes Y
     \end{equation}
 in $\mathcal{C}$. Since $\mathcal{C}$ admits conditionals, 
 it also admits Bayesian inversions. Hence, we get 
 a morphism 
 \[f_{\mathrm{joint}}^\dag \colon Z \to M \odot I \] with 
 respect to the prior state $\pi_{M,0}$ on $M$ where $Z := X \otimes Y$.
 
 Note that $f_{\mathrm{joint}}^\dag$ is not unique.
 In the previous section, we got around this issue 
 by working in the category $\mathrm{PS}(\mathcal{C})$. 
 However, if we want to update the state on $\mathcal{M}$ 
 sequentially then this corresponds to 
 sequentially updating objects in $\mathrm{PS}(\mathcal{M})$
 which will then require us to update the model $f$ or more precisely its image
 in $\mathrm{PS}(\mathcal{C})$.
 Instead, we introduce Definition \ref{def : unique inversion} 
 to ensure that the updated priors remain well defined. 
 
 Recall that 
 $\mathcal{C} = \mathrm{Kl}(\mathcal{P})$ where $\mathcal{P}$ is a monad
 on the symmetric monoidal category $(\mathcal{D},I_{\mathcal{D}},\ast)$.
 Since $\mathcal{P}$ is a monad, we have a family of 
 maps $\eta_X \colon X \to \mathcal{P}(X)$ for every $X \in \mathrm{Ob}(\mathcal{D})$. 
 Given a map $a \colon X \to Y$ in $\mathcal{D}$, let 
 $\eta(a)$ denote its image in $\mathcal{C}$ i.e. the composition 
 $X \xrightarrow{a} Y \xrightarrow{\eta_Y} \mathcal{P}(Y)$. 
 
 \begin{defi} \label{def : unique inversion}
  \emph{Let $f \colon X \to Y$ be a morphism in $\mathcal{C}$
  and $\pi_X \colon I \to X$ be a state on $X$. 
  Let $y \colon I_{\mathcal{D}} \to Y$ be a morphism 
  in $\mathcal{D}$. We say that the} Bayesian inverse of $f$ 
  is uniquely defined at $y$ \emph{if 
  for any morphisms $g,h \colon Y \to X$ in $\mathcal{C}$ such that 
  $g \sim_{\pi_X-a.s} h$ and $g$ is a Bayesian inversion of $f$ then 
  $\eta(y);g = \eta(y);h$.}
 \end{defi} 
 
 We refer to the morphism $y$ that appears in the definition above as an \emph{elementary point} of the 
 object $Y$. A precise definition is as follows. 
 
 \begin{defi} \label{def : elementary point}
 \emph{Let $Y$ be an object of $\mathcal{C}$.
  By an} elementary point of $Y$ \emph{we mean a morphism 
  $y \colon I_{\mathcal{D}} \to Y$
  in $\mathcal{D}$.
  We use $\delta_y$ to denote the image of $y$ in $\mathcal{C}$ i.e.
  $\delta_y := \eta(y)$.}
 \end{defi}
 
 \begin{es}
 \emph{We provide an example of Definition \ref{def : unique inversion}. 
 Recall from Example \ref{es : markov category examples}, the category 
 FinStoch. 
 Note that $\mathrm{FinStoch} = \mathrm{Kl}(\mathrm{Dist})$
 where $\mathrm{Dist} \colon \mathrm{FinSet} \to \mathrm{Finset}$ 
 is a symmetric monoidal monad on the symmetric monoidal category FinSet whose objects 
 are finite sets and morphisms are functions of sets. 
 Let $X$ be a finite set, $\pi_X \colon I \to X$ be a state on $X$ and 
 $f \colon X \to Y$ be a morphism in FinStoch. 
 By definition, $\pi_X$ corresponds to a probability distribution $p_X$
 on $X$ while $f$ defines a conditional distribution.
  Let $y_0 \colon I \to Y$ be a morphism in FinSet. 
  It follows that $y_0$ 
  is uniquely determined by a point in $Y$ which we abuse notation for and call $y_0$ as well. 
  The Bayesian inversion $g \colon Y \to X$ is defined by} 
  \[g(y)(x) = \frac{f(x)(y)p_X(x)}{\sum_{x' \in X} f(x')(y)p_X(x')}\]
  \emph{if $\sum_{x' \in X} f(x')(y)p_X(x') \neq 0$ and if $y$ is such that 
  $\sum_{x' \in X} f(x')(y)p_X(x') = 0$ then 
  $g(y)$ can be any probability distribution on $X$. 
   Thus we see in this situation that} 
   the Bayesian inversion $g$ is uniquely defined 
   at $y_0$ \emph{if and only if} 
   \[\sum_{x' \in X} f(x')(y_0)p_X(x') \neq 0.\]
   
   \emph{In the case of the category Stoch, things are more complicated since it 
   is not always true that a Bayesian inversion exists. However, in certain cases where 
   we are working with subspaces of $\mathbb{R}$ and
   both state and channel are defined using density functions then 
   a similar calculation as above can be performed 
   (cf.\cite[Example 3.9]{CJ2019disintegration}).}
   \end{es}   
  
 Let 
 $\mathbf{T} := [x_1 \otimes y_1, \ldots, x_n \otimes y_n]$
 be a list of $n$ elementary points of $X \otimes Y$ where for every $i$, 
 $z_i := x_i \otimes y_i$ satisfies a property to be specified below.  
 We begin with a prior $\pi_{M,0}$ on the parameter object $M$. 
  Let us suppose that we have 
  obtained the $i$-th sequential update i.e. 
  a state $\pi_{M,i}$ on $M$. 
  We define $\pi_{M,i+1}$ as follows. 
  By taking the Bayesian inversion of 
  $f_\mathrm{joint}$ with respect to $\pi_{M,i}$, 
  we get 
  \[f^{\dag}_{\mathrm{joint},i} \colon X \otimes Y \to M.\]
  The state $\pi_{M,i+1}$ on $X \otimes Y$ is given 
  by the composition 
  \[I \xrightarrow{\delta_{x_{i+1}} \otimes \delta_{y_{i+1}}} X \otimes Y \xrightarrow{f^{\dag}_{\mathrm{joint},i}} M\]
  and we suppose that the point $z_{i+1}$ is such that 
  $f^{\dag}_{\mathrm{joint},i}$ is unique at $z_{i+1}$.
  
  \begin{es}
    \emph{Let us demonstrate the sequential update procedure in the category 
    FinStoch acting on itself. 
    As above, we are given model $f \colon M \times X \to Y$ where $M,X$ and $Y$ are 
    finite sets and $f$ is a morphism in FinStoch. 
    This induces a function $f_{\mathrm{joint}} \colon M \to Z$ where 
    $Z := X \times Y$. 
    Let us assume 
    we are give a training set 
    \[ \mathbf{T} := [ (x_1,y_1), (x_2,y_2) ] \]
    and a prior state $\pi_{M,0}$ on $M$. In this context, this means
    a probability distribution on $M$. 
    Let $z_i := (x_i,y_i)$.}
    
   \begin{enumerate} \label{es : sequential updates}
    \item \emph{For $i = 1$, we have that for $m \in M$ and $z \in Z$,
    \[f^{\dag}_{\mathrm{joint},0}(z)(m)\pi_{Z,0}(z) = {f_{\mathrm{joint}}(m)(z)\pi_{M,0}(m)} \]
    where for $z \in Z$,}
    \[\pi_{Z,0}(z) = \sum_{m \in M} {f_{\mathrm{joint}}(m')(z)\pi_{M,0}(m')}.\]
   \emph{Recall our assumption that
   $\pi_{Z,0}(z_1) \neq 0$. 
   We update the prior by setting 
   $\pi_{M,1}(m) := f^{\dag}_{\mathrm{joint},0}(z_1)(m)$.}
    \item \emph{Likewise, for $i = 2$,}
      \[f^{\dag}_{\mathrm{joint},1}(z)(m)\pi_{Z,1}(z) = {f_{\mathrm{joint}}(m)(z)\pi_{M,1}(m)} \]
      \emph{where for $z \in Z$,}
    \[\pi_{Z,1}(z) = \sum_{m \in M} {f_{\mathrm{joint}}(m')(z)\pi_{M,1}(m')}.\]
    \emph{Expanding using (1),} 
     \[f^{\dag}_{\mathrm{joint},1}(z_2)(m) \propto {f_{\mathrm{joint}}(m)(z_2)f_{\mathrm{joint}}(m)(z_1)\pi_{M,0}(m)} \]
   \emph{We update the prior by setting 
   $\pi_2(m) := f^{\dag}_{\mathrm{joint},1}(z_2)(m)$.}  
    \end{enumerate}  
  \end{es}

  \subsection{Batch updates}
 
     To obtain an update of the prior all at once, we work with an object 
     built from $X \otimes Y$ but whose \emph{points} correspond 
     to datasets of a specified cardinality. 
     
     Let $n \in \mathbb{N}$ and 
     we set
     \[Z_n := \otimes^n (X \otimes Y).\] 
     The model $f$ induces a morphism 
    \[f_{\mathrm{joint}}^n \colon M \to Z_n \]
    defined as the composition 
    \[M \xrightarrow{\otimes^n \mathbf{copy}_M} \otimes^n M \xrightarrow{\otimes^n f_{\mathrm{joint}}} Z_n.\]
     As before, let
     $\mathbf{T} := [x_1 \otimes y_1, \ldots, x_n \otimes y_n]$
     be a list of $n$ elementary points of $X \otimes Y$. 
     The list $\mathbf{T}$ defines an elementary point 
     of $Z_n$. Indeed, we set 
     $z_\mathbf{T}$ to be the composition 
     \[I_{\mathcal{D}} \xrightarrow{\otimes^n \mathbf{copy}_{I_{\mathcal{D}}}} \otimes^n I_{\mathcal{D}} \xrightarrow{z_1 \otimes \ldots \otimes z_n} Z_n\] 
     where as before $z_i = x_i \otimes y_i$.
     We suppose $\mathbf{T}$ is
     such that 
     the Bayesian inversion 
     of $f_{\mathrm{joint}}^n$ is uniquely defined at 
     $z_\mathbf{T}$. 
     The batch update of 
     the prior $\pi_{M,0}$ with respect to 
     $\mathbf{T}$ is given by the 
     composition 
     \[I \xrightarrow{\delta_{z_\mathbf{T}}} Z \xrightarrow{(f_{\mathrm{joint}}^n)^{\dag}} M\]
   where
 $(f_{\mathrm{joint}}^n)^{\dag}$ is a Bayesian inversion of 
 $f_{\mathrm{joint}}^n$ with respect to the prior $\pi_{M,0}$. 
  Let $\pi_{M,\mathbf{T}} \colon I \to M$ denote this updated prior.

 We can ask the following question. Under what conditions, can we ensure 
 that $\pi_{M,n} = \pi_{M,\mathbf{T}}$ where $\pi_{M,n}$ is as defined 
 at the end of \S \ref{section : sequential updates}
 for $\mathbf{T}$. 
 
 \begin{es} \label{es : batch updates}
   \emph{Let us continue our discussion as in 
   Example \ref{es : sequential updates} using the category FinStoch.
    As in \ref{es : sequential updates}, 
    we are given a model $f \colon M \times X \to Y$ where $M,X$ and $Y$ are 
    finite sets and $f$ is a morphism in FinStoch, a training set}
    \[ \mathbf{T} := \{ (x_1,y_1), (x_2,y_2) \} \]
    \emph{and a prior state $\pi_{M,0}$ on $M$. 
    Let $z_i := (x_i,y_i)$ and $z_\mathbf{T} := (z_1,z_2)$.
     We import notation from \ref{es : sequential updates}.}
    
    \emph{It follows that}
    \[Z_2 = (X \times Y)^2\]
    \emph{and}
    \[f^2_\mathrm{joint} \colon M \to Z_2.\]
    \emph{Hence, we get that} 
    \[(f^2_{\mathrm{joint}})^{\dag} \colon Z_2 \to M.\]
    \emph{For $m \in M$ and $w \in Z_2$,} 
    \[(f^2_{\mathrm{joint}})^{\dag}(w)(m) \propto f^2_{\mathrm{joint}}(m)(w)\pi_{M,0}(m).\]
   \emph{We set} 
   \[ \pi_{M,\mathbf{T}}(m) := (f^2_{\mathrm{joint}})^{\dag}((z_1,z_2))(m)\]
    \emph{for $m \in M$
    and hence}
   \[ \pi_{M,\mathbf{T}}(m) \propto f_{\mathrm{joint}}(m)(z_2)f_{\mathrm{joint}}(m)(z_1)\pi_{M,0}(m)\]    
   \emph{since by definition,
   $f^2_{\mathrm{joint}}(m)((a,b)) = f_{\mathrm{joint}}(m)(a)f_{\mathrm{joint}}(m)(b).$}
 \end{es} 
   
  \begin{rem}
  \emph{Observe from examples \ref{es : sequential updates} and \ref{es : batch updates}
  that the sequential Bayes update and batch update coincide.
  This begs the following question. What conditions can we impose to
  relate the sequential and batch updates in the general setting 
  of the category $\mathcal{C}$ used throughout this section.}
  \end{rem}

  \bibliographystyle{plain}
\bibliography{library}

%

   \end{document}